 \newcommand\dom{\operatorname{dom}}
\DeclareMathOperator{\re}{Re}
\DeclareMathOperator{\Span}{span}
 \DeclareMathOperator{\ran}{ran}  
 \DeclareMathOperator{\diag}{diag}
 \DeclareMathOperator{\logg}{Log}
  \DeclareMathOperator{\opt}{\emph{\textbf{t}}}
 \newcommand{\bb}[1]{\mathbf{#1}}
\newcommand{\EE}{\mathcal E} 
\newtheorem{theorem}{Theorem}[section]
 \newtheorem{lemma}[theorem]{Lemma}
 \newtheorem{proposition}[theorem]{Proposition}
 \theoremstyle{definition}
 \newtheorem{definition}[theorem]{Definition}
 \theoremstyle{remark}
 \newtheorem{remark}[theorem]{Remark}
 \numberwithin{equation}{section}
\renewcommand{\log}{\textrm{Log}}
\begin{document}

\title{On the standing waves of the NLS-log equation  with point interaction on a star graph}

\author{Nataliia Goloshchapova}
\ead{nataliia@ime.usp.br}


\address{University of S\~{a}o Paulo, Rua do Mat\~{a}o 1010,  CEP 05508-090, S\~{a}o Paulo, SP, Brazil}

\begin{abstract}
We study  the nonlinear Schr\"odinger equation with logarithmic nonlinearity on a star graph $\mathcal{G}$. At the vertex an interaction occurs described by a boundary condition
of delta type with strength $\alpha\in \mathbb{R}$.
We investigate   orbital stability  and  spectral instability of the standing wave solutions  $e^{i\omega t}\mathbf{\Phi}(x)$ to the equation when the profile $\bb \Phi(x)$  has mixed structure (i.e. has  bumps and tails). In our approach  we essentially use the extension theory of symmetric operators by Krein-von Neumann, and the analytic perturbations theory. 
\end{abstract}

\begin{keyword}  logarithmic nonlinearity \sep Nonlinear Schr\"odinger equation\sep  orbital stability  \sep  spectral instability \sep standing wave \sep star graph.

\MSC[2010]
35Q55; 81Q35; 37K40; 37K45; 47E05\end{keyword}
\maketitle

\section{Introduction}

The logarithmic Schr\"odinger equation
$$
i\partial_tu(t,x)+\Delta u(t,x) + u(t,x)\log|u(t,x)|^2 = 0,\quad u(t,x): \mathbb{R}\times \mathbb{R}^n \rightarrow \mathbb{C},\,\, n\geq 1,$$
admits applications to quantum mechanics, quantum optics, nuclear physics, transport
and diffusion phenomena, open quantum systems, effective quantum gravity,
theory of superfluidity and Bose-Einstein condensation (BEC). This equation has been proposed by Bialynicki-Birula and Mycielski (see \cite{BiaMyc76})
in order to obtain a nonlinear equation which helped to quantify departures
from the strictly linear regime, preserving in any number of dimensions some
fundamental aspects of quantum mechanics, such as separability and additivity
of total energy of noninteracting subsystems. 

In the present paper we study the logarithmic Schr\"odinger equation on a star graph
 $\mathcal{G}$,  i.e. $N$
half-lines $(0,\infty)$ joined at the vertex $\nu=0$. Namely, on $\mathcal{G}$ we consider the   following nonlinear Schr\"odinger equation   with $\delta$-interaction (NLS-log-$\delta$ equation)
\begin{equation}\label{NLS_graph_ger}
i\partial_t \mathbf{U}(t,x)-\mathbf{H}^\alpha_\delta\mathbf{U}(t,x) +\mathbf{U}(t,x)\log|\mathbf{U}(t,x)|^2=0,
\end{equation}
where $\mathbf{U}(t,x)=(u_j(t,x))_{j=1}^N:\mathbb{R}\times \mathbb{R}_+\rightarrow \mathbb{C}^N$,\,  nonlinearity acts componentwise,  i.e. $(\mathbf{U}\log|\mathbf{U}|^2)_j=u_j\log|u_j|^2,$ and $\mathbf{H}^\alpha_\delta$ is the self-adjoint operator on $L^2(\mathcal{G})$ defined  for $\mathbf{V}=(v_j)_{j=1}^N$ by
\begin{equation}\label{D_alpha}
\begin{split}
(\mathbf{H}^\alpha_\delta \mathbf{V})(x)&=\left(-v_j''(x)\right)_{j=1}^N,\quad x> 0,\\
\dom(\mathbf{H}^\alpha_\delta)&=\left\{\mathbf{V}\in H^2(\mathcal{G}): v_1(0)=\dots=v_N(0),\,\,\sum\limits_{j=1}^N  v_j'(0)=\alpha v_1(0)\right\}.
\end{split}
\end{equation}
The condition \eqref{D_alpha} can be considered  as an analog of  $\delta$-interaction condition  for the \newline
Schr\"odinger operator on the line  (see \cite{AlbGes05}), which justifies the  name of the equation. 

Equation  \eqref{NLS_graph_ger} means that on each edge of the graph (i.e. on each half-line) we have 
$$i\partial_t u_j(t,x)+\partial_x^2u_j(t,x) +u_j(t,x)\log|u_j(t,x)|^2=0,\,\,x>0,\,\,j\in\{1,\dots,N\},$$ 
moreover, the vectors $\bb U(t,0)=(u_j(t,0))_{j=1}^N$ and $\bb U'(t,0)=(u'_j(t,0))_{j=1}^N$ satisfy conditions in \eqref{D_alpha}.

 Equation \eqref{NLS_graph_ger} models propagation through junctions in networks. 
In particular, models of BEC on graphs/networks is a topic of active research.

The quantum graphs (star graphs equipped with  a linear  Hamiltonian $\bb H$) have been a very developed subject in the last couple of decades. They give  simplified models in mathematics, physics, chemistry, and engineering, when one considers propagation of waves of various type through a quasi one-dimensional  system that looks like a thing neighborhood of a  graph (see \cite{BK, Exn08, K, Mug15} for details and references). 

The nonlinear PDEs on graphs have been actively  studied in the last ten years in the context of existence, stability, and propagation of solitary waves (see \cite{CacFin17, Noj14} for the references).

The main purpose  of this work is to study the   stability properties of the standing wave solutions   
$$
\mathbf{U}(t,x)=e^{i\omega t}\mathbf{\Phi}(x)=\left(e^{i\omega t}\varphi_{j}(x)\right)_{j=1}^N,
$$ 
to NLS-log-$\delta$ equation   \eqref{NLS_graph_ger}. Analogous problem was considered for NLS-log-$\delta$ equation on the line (see \cite{AngArd17, AngGol17}).
Similarly to the case of NLS equation with power nonlinearity investigated  in  \cite{AdaNoj14},  it can be  shown that all possible profiles $\bb \Phi(x)$ belong to the specific family of $\left[\frac{N-1}{2}\right]+1$  vector functions $\bb\Phi_k^\alpha=(\varphi^\alpha_{k,j})_{j=1}^N, \,\,k=0,\dots,\left[\tfrac{N-1}{2}\right],$ given by 
\begin{equation}\label{Phi_vect_log}
\begin{split}
 \varphi_{k,j}^\alpha(x)= \left\{
                    \begin{array}{ll}
                      e^{\tfrac{\omega+1}{2}}e^{-\tfrac{(x-a_k)^2}{2}}, & \,\hbox{$j=1,\dots,k$;} \\
                      e^{\tfrac{\omega+1}{2}}e^{-\tfrac{(x+a_k)^2}{2}}, & \,\hbox{$j=k+1,\dots,N,$}
                    \end{array}
                  \right.
                \,\,\text{where}\,\, a_k=\frac{\alpha}{2k-N}.
                  \end{split}
\end{equation} 

In the case $\alpha<0$ vector  $\mathbf{\Phi}_k^\alpha=(\varphi^\alpha_{k,j})_{j=1}^N$ has $k$ \textit{bumps} and $N-k$ \textit{tails} (all possible profiles for $N=5$ are given on Figure 1). It is easily seen that  
 $\mathbf{\Phi}^{\alpha}_0$ is the \textit{N-tail profile}. Moreover, the $N$-tail profile is the only symmetric (i.e. invariant under permutations of the edges) profile.  
  In the case $\alpha>0$ vector  $\mathbf{\Phi}_k^\alpha=(\varphi^\alpha_{k,j})_{j=1}^N$ has $k$ \textit{tails} and $N-k$ \textit{bumps} respectively. The case $N=5$ is  demonstrated on Figure 2.
  \\ 
  
 	\begin{tikzpicture}[scale=0.8]
	\clip (0,-3) rectangle (18,2);
	    \draw[-,color=gray] (2,1).. controls +(-0.7,-0.7) ..  (0.3,0.2);
       \draw[-,color=gray] (2,1).. controls +(-0.3,-0.9)  ..  (0.5,-1);  
        \draw[-,color=gray] (2,1).. controls +(0.2,-1)  ..  (2.8,-1.2);
        \draw[-,color=gray] (2,1) .. controls +(0.2,-0.5)  ..  (4.2,-0.2);
            \draw[-,color=gray] (2,1).. controls +(0.6,-0.3)  ..  (3.5,1);
		\draw[-latex,thin](2,0)--++(-2,0);
        \draw[-latex, thin](2,0)--++(-1.8,-1.3);
        \draw[-latex, thin](2,0)--++(1,-1.7);
		\draw[-latex, thin](2,0)--++(2.5,-0.5);
        \draw[-latex, thin](2,0)--++(2,1);

		\begin{scope}[shift={(6,0)}]
	     \draw[-,color=gray] (2,1).. controls +(-0.5,0.5) and +(1.5,-0.1) ..  (0.3,0.2);
       \draw[-,color=gray] (2,1).. controls +(-0.3,-0.9)  ..  (0.5,-1);  
        \draw[-,color=gray] (2,1).. controls +(0.2,-1)  ..  (2.8,-1.2);
        \draw[-,color=gray] (2,1) .. controls +(0.2,-0.5)  ..  (4.2,-0.2);
            \draw[-,color=gray] (2,1).. controls +(0.6,-0.3)  ..  (3.5,1);
		\draw[-latex,thin](2,0)--++(-2,0);
        \draw[-latex, thin](2,0)--++(-1.8,-1.3);
        \draw[-latex, thin](2,0)--++(1,-1.7);
		\draw[-latex, thin](2,0)--++(2.5,-0.5);
        \draw[-latex, thin](2,0)--++(2,1);
		\node[label={[xshift=2cm, yshift=-2.5cm] Figure 1}]{};	\end{scope}

		\begin{scope}[shift={(12,0)}]
	     \draw[-,color=gray] (2,1).. controls +(-0.5,0.5) and +(1.5,-0.1) ..  (0.3,0.2);
       \draw[-,color=gray] (2,1).. controls +(-0.3,-0.9)  ..  (0.5,-1);  
        \draw[-,color=gray] (2,1).. controls +(0.2,-1)  ..  (2.8,-1.2);
        \draw[-,color=gray] (2,1) .. controls +(0.2,-0.5)  ..  (4.2,-0.2);
            \draw[-,color=gray] (2,1).. controls +(0.3,0.5) and +(-0.8,-0.5)  ..  (3.5,1);
		\draw[-latex,thin](2,0)--++(-2,0);
        \draw[-latex, thin](2,0)--++(-1.8,-1.3);
        \draw[-latex, thin](2,0)--++(1,-1.7);
		\draw[-latex, thin](2,0)--++(2.5,-0.5);
        \draw[-latex, thin](2,0)--++(2,1);
	\end{scope}
\end{tikzpicture}

\begin{tikzpicture}[scale=0.8]
	\clip (0,-3) rectangle (18,2);
	    \draw[-,color=gray] (2,1).. controls +(-0.5,0.5) and +(1.5,-0.1) ..  (0.3,0.2);
       \draw[-,color=gray] (2,1).. controls +(-0.3,0.4) and +(1, 0.5) ..  (0.5,-1);  
        \draw[-,color=gray] (2,1).. controls +(0.2,0.3) and +(-0.1, -0.1) ..  (2.8,-1.2);
        \draw[-,color=gray] (2,1) .. controls +(0.2,0.5) and +(-1.1, 0.3) ..  (4.2,-0.2);
            \draw[-,color=gray] (2,1)..  controls +(0.3,0.5) and +(-0.8,-0.5)  ..  (3.5,1);
		\draw[-latex,thin](2,0)--++(-2,0);
        \draw[-latex, thin](2,0)--++(-1.8,-1.3);
        \draw[-latex, thin](2,0)--++(1,-1.7);
		\draw[-latex, thin](2,0)--++(2.5,-0.5);
        \draw[-latex, thin](2,0)--++(2,1);

		\begin{scope}[shift={(6,0)}]
	     \draw[-,color=gray] (2,1).. controls +(-0.7,-0.7) ..  (0.3,0.2);
       \draw[-,color=gray] (2,1).. controls +(-0.3,0.4) and +(1, 0.5) ..  (0.5,-1);  
        \draw[-,color=gray] (2,1).. controls +(0.2,0.3) and +(-0.1, -0.1) ..  (2.8,-1.2);
        \draw[-,color=gray] (2,1) .. controls +(0.2,0.5) and +(-1.1, 0.3) ..  (4.2,-0.2);
           \draw[-,color=gray] (2,1)..  controls +(0.3,0.5) and +(-0.8,-0.5)  ..  (3.5,1);
		\draw[-latex,thin](2,0)--++(-2,0);
        \draw[-latex, thin](2,0)--++(-1.8,-1.3);
        \draw[-latex, thin](2,0)--++(1,-1.7);
		\draw[-latex, thin](2,0)--++(2.5,-0.5);
        \draw[-latex, thin](2,0)--++(2,1);
		\node[label={[xshift=2cm, yshift=-2.5cm] Figure 2}]{};	\end{scope}

		\begin{scope}[shift={(12,0)}]
	    \draw[-,color=gray] (2,1).. controls +(-0.7,-0.7) ..  (0.3,0.2);
       \draw[-,color=gray] (2,1).. controls +(-0.3,0.4) and +(1, 0.5) ..  (0.5,-1);  
        \draw[-,color=gray] (2,1).. controls +(0.2,0.3) and +(-0.1, -0.1) ..  (2.8,-1.2);
        \draw[-,color=gray] (2,1) .. controls +(0.2,0.5) and +(-1.1, 0.3) ..  (4.2,-0.2);
          \draw[-,color=gray] (2,1).. controls +(0.6,-0.3)  ..  (3.5,1);
		\draw[-latex,thin](2,0)--++(-2,0);
        \draw[-latex, thin](2,0)--++(-1.8,-1.3);
        \draw[-latex, thin](2,0)--++(1,-1.7);
		\draw[-latex, thin](2,0)--++(2.5,-0.5);
        \draw[-latex, thin](2,0)--++(2,1);
	\end{scope}
\end{tikzpicture}

 In \cite{Ard16} the author proved (via variational approach) the orbital  stability of the symmetric profile $\bb\Phi_0^\alpha$ in the energy space $W_{\EE}(\mathcal{G})$ (defined in notation section  below) under the restriction  $\alpha<\alpha^*<0$. Namely, the orbital stability follows from the fact that $\bb\Phi_0^\alpha$ is a  minimizer of the action functional  restricted to the Nehari manifold. 
Recently in  \cite{AngGol16} we  proved  orbital stability of $\bb\Phi_0^\alpha$  for  $\alpha<0$ in the weighted Hilbert space $W^1_{\EE}(\mathcal{G})$ without restriction $\alpha<\alpha^*<0$. The use of different space $W^1_{\EE}(\mathcal{G})$ is due to application of the classical Lyapunov  linearization procedure around the standing wave solution (i.e. the linearized  operator associated to  $\bb\Phi_0^\alpha$  has to be rigorously defined in an appropriate Hilbert space). 

The main result of this paper is the following stability/instability theorem for  the rest of the profiles   $\mathbf{\Phi}^{\alpha}_k,\  k\neq 0$. 

\begin{theorem}\label{main_log}
For any $k=1,\dots,\left[\tfrac{N-1}{2}\right]$, $\omega\in\mathbb{R},$ the standing wave $e^{i\omega t}\mathbf{\Phi}^{\alpha}_k$ is spectrally unstable for $\alpha<0$ and orbitally stable in $W^1_{\EE,k}(\mathcal{G})$ for $\alpha>0$.
\end{theorem}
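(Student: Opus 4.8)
\emph{Proof proposal.} The plan is to place the problem inside the Grillakis--Shatah--Strauss / Vakhitov--Kolokolov framework, so that everything reduces to a spectral count of the Hessian of the action $E(\mathbf U)+\omega Q(\mathbf U)$ at $\mathbf\Phi_k^\alpha$, where $Q(\mathbf U)=\tfrac12\|\mathbf U\|_{L^2(\mathcal G)}^2$ is the charge and $E$ the NLS-log energy. First I would record that $\mathbf\Phi_k^\alpha$ solves $\mathbf H^\alpha_\delta\mathbf\Phi+\omega\mathbf\Phi-\mathbf\Phi\log|\mathbf\Phi|^2=\mathbf 0$ and lies in $\dom(\mathbf H^\alpha_\delta)$ (the continuity and the $\delta$-condition hold exactly because $a_k=\alpha/(2k-N)$). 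Linearizing around the wave yields the self-adjoint operators $\mathbf L_-=\mathbf H^\alpha_\delta+\omega-\log|\mathbf\Phi_k^\alpha|^2$ and $\mathbf L_+=\mathbf L_--2$ on $\dom(\mathbf H^\alpha_\delta)$. Since $\log|\varphi_{k,j}^\alpha|^2=(\omega+1)-(x\mp a_k)^2$, each component of $\mathbf M:=\mathbf L_-+1$ is the shifted harmonic oscillator $-\partial_x^2+(x\mp a_k)^2$, which has compact resolvent, so $\mathbf L_\pm$ have purely discrete spectra. Two structural facts then drive the proof: $\mathbf L_+=\mathbf M-3$, so $n(\mathbf L_+)$ equals the number of eigenvalues of $\mathbf M$ below $3$; and $\tfrac{d}{d\omega}\|\mathbf\Phi_k^\alpha\|_{L^2}^2=\|\mathbf\Phi_k^\alpha\|_{L^2}^2>0$ (the profile is $e^{(\omega+1)/2}$ times an $\omega$-independent Gaussian), so the slope condition $d''(\omega)>0$ holds automatically for every $\omega$.

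Next I would exploit the $S_k\times S_{N-k}$ symmetry of $\mathbf\Phi_k^\alpha$ (permutations of the $k$ edges centered at $a_k$ and of the $N-k$ edges centered at $-a_k$). This splits $\mathbf L_\pm$ into Dirichlet blocks on the antisymmetric sectors (where continuity forces the common vertex value to vanish) plus a reduced two-component operator $\widehat{\mathbf L}_\pm$ on the fully symmetric sector, which is precisely the symmetry-reduced energy space $W^1_{\EE,k}(\mathcal G)$ in which stability is asserted. On this sector a state is a pair $(g,h)$ with $g(0)=h(0)$ and $kg'(0)+(N-k)h'(0)=\alpha g(0)$, and the form carries the point term $\alpha|g(0)|^2$. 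Because $\mathbf\Phi_k^\alpha>0$ lies in $\ker\widehat{\mathbf L}_-$ and the $\delta$-form generates a positivity-preserving semigroup, $\mathbf\Phi_k^\alpha$ is the simple ground state of $\widehat{\mathbf M}$ at level $1$; hence $\widehat{\mathbf L}_-=\widehat{\mathbf M}-1\ge0$ with $\ker\widehat{\mathbf L}_-=\Span\{\mathbf\Phi_k^\alpha\}$ and $n(\widehat{\mathbf L}_-)=0$.

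The heart of the matter is to show $n(\widehat{\mathbf L}_+)=2$ for $\alpha<0$ and $n(\widehat{\mathbf L}_+)=1$ for $\alpha>0$, i.e.\ that the second eigenvalue $\nu_1(\alpha)$ of $\widehat{\mathbf M}$ crosses the value $3$ exactly at $\alpha=0$. Here I would use the Krein--von Neumann extension picture to regard $\{\widehat{\mathbf M}^\alpha\}$ as an analytic family of self-adjoint extensions and apply Kato's analytic perturbation theory. At $\alpha=0$ one has $a_k=0$; the symmetric diagonal mode obeys a Neumann condition and the transversal mode a Dirichlet condition, and the lowest Dirichlet level of $-\partial_x^2+x^2$ on the half-line gives $\nu_1(0)=3$ with eigenfunction assembled from $u_3(x)=xe^{-x^2/2}$, so $\widehat{\mathbf L}_+$ has a simple zero mode at $\alpha=0$. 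A Feynman--Hellmann computation then fixes the direction of the crossing: the form depends on $\alpha$ only through the shift $a_k$ and the point term $\alpha|g(0)|^2$, and the zero mode vanishes at the vertex, so only the potential contributes and one finds $\tfrac{d\nu_1}{d\alpha}\big|_{0}=\big(N\|u_3\|_{L^2}^2\big)^{-1}>0$. Thus $\nu_1(\alpha)<3$ for small $\alpha<0$ and $\nu_1(\alpha)>3$ for small $\alpha>0$. To upgrade this to all $\alpha\ne0$ I would solve the eigenvalue problem of $\widehat{\mathbf M}^\alpha$ at energy $3$ in terms of parabolic cylinder functions and show the resulting vertex determinant vanishes only for $a_k=0$, which freezes the negative count on each of $\alpha<0$ and $\alpha>0$ at its near-zero value. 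This global control of the eigenvalue crossings is the step I expect to be the main obstacle.

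Finally I would feed the counts into the Hamiltonian--Krein index formula
\[
k_r+2k_c+2k_i^-=n(\widehat{\mathbf L}_+)+n(\widehat{\mathbf L}_-)-n(D),
\]
where $D=\langle\widehat{\mathbf L}_+^{-1}\mathbf\Phi_k^\alpha,\mathbf\Phi_k^\alpha\rangle=-d''(\omega)<0$, so $n(D)=1$. For $\alpha<0$ this gives $k_r+2k_c+2k_i^-=2-1=1$, forcing $k_r=1$: the operator $\mathcal J\widehat{\mathbf L}$, $\widehat{\mathbf L}=\diag(\widehat{\mathbf L}_+,\widehat{\mathbf L}_-)$, has a real positive eigenvalue, whose (symmetric) eigenfunction extends to the full linearization, so $e^{i\omega t}\mathbf\Phi_k^\alpha$ is spectrally unstable. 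For $\alpha>0$ the same formula gives $1-1=0$, ruling out unstable and negative-Krein modes; combined with $n(\widehat{\mathbf L}_+)=1$, $\widehat{\mathbf L}_-\ge0$ with $\ker\widehat{\mathbf L}_-=\Span\{\mathbf\Phi_k^\alpha\}$, and $d''(\omega)>0$, the Grillakis--Shatah--Strauss theorem in the form adapted to the logarithmic nonlinearity and the weighted energy space (as in \cite{AngGol16}) yields orbital stability in $W^1_{\EE,k}(\mathcal G)$. The remaining technical input is the well-posedness and $C^1$ conservation-law framework for the log-nonlinearity in $W^1_{\EE,k}(\mathcal G)$, inherited from the analysis of $\mathbf\Phi_0^\alpha$.
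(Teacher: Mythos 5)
Your proposal is correct, and its architecture coincides with the paper's own proof: symmetry reduction to $L^2_k(\mathcal G)$, nonnegativity of the $L_-$-type operator $\bb T^\alpha_{2,k}$ with kernel spanned by the profile, Kato analytic perturbation in $\alpha$ from the Kirchhoff operator at $\alpha=0$, a first-order eigenvalue computation giving positive slope (your value $\bigl(N\|u_3\|_{L^2}^2\bigr)^{-1}$ agrees with the paper's $\mu_{0,k}$ in Proposition \ref{signeigen}), a continuation argument in $\alpha$, and the counts $n=2$ for $\alpha<0$, $n=1$ for $\alpha>0$ fed into a GSS-type criterion together with the automatic slope condition. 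The genuine differences are local. First, at $\alpha=0$ you diagonalize the reduced operator explicitly into Neumann and Dirichlet half-line oscillators; the paper (Theorem \ref{spect_L^0_1}) instead obtains $n(\bb T^0_1)=1$ from Krein--von Neumann extension theory (deficiency indices of the Dirichlet restriction plus Proposition \ref{semibounded}) -- your route is more elementary on the symmetric sector, while the paper's also handles the full space $L^2(\mathcal G)$. Second, you conclude via the Hamiltonian--Krein index identity, whereas the paper cites Grillakis--Shatah--Strauss I and II directly (Theorem \ref{stabil_graph_log}); in this setting the two are equivalent. Third, you prove $\widehat{\bb L}_-\geq 0$ by a Perron--Frobenius/positivity argument, while the paper uses an explicit ground-state factorization identity. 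Two corrections of emphasis: the step you single out as ``the main obstacle'' -- ruling out a kernel of $L_+$ for every $\alpha\neq 0$ -- is in fact elementary, since at that energy the unique decaying solution on each edge is $\varphi'_{k,j}$ explicitly, and the two vertex conditions force the coefficient to vanish (Proposition \ref{grafoN2}$(ii)$); the paper then freezes the negative count by a Riesz-projection continuation (Proposition \ref{n(L_1)}), exactly as you intend, with no parabolic-cylinder analysis needed. Also, the well-posedness and the continuity of $E$ in $W^1_{\EE,k}(\mathcal{G})$ are not merely ``inherited'' from the $\bb\Phi_0$ theory: the paper proves them (Theorem \ref{well_pos_log}, Proposition \ref{cont_ener}), including the invariance of $W^1_{\EE,k}(\mathcal{G})$ under the flow, which your stability statement requires.
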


 To our knowledge, this is the first result on the stability/instability of the profiles $\mathbf{\Phi}_{k}^\alpha$ in the case $k\neq 0$. For $\alpha<0$ they are called \textit{excited states }due to the property $S(\mathbf{\Phi}^{\alpha}_0)<S(\mathbf{\Phi}^{\alpha}_k)<S(\mathbf{\Phi}^{\alpha}_{k+1})$, where $S$ is the action functional associated to  equation \eqref{NLS_graph_ger}. Stability  of the excited states is itself very interesting problem since  there
are only few cases when excited states of NLS equations are explicitly known. 

It is worth noting that we do not use variational techniques. Our approach  is purely analytical, and it is based on  the extension theory of symmetric operators, the  analytic perturbations theory,  and  the  well-known approach by  Grillakis,  Shatah and Strauss. 
\vskip0.2in

\noindent{\bf Notation.}
Let $L$ be a densely defined symmetric operator in a Hilbert space $\mathcal{H}$. The domain of $L$ is denoted by $\dom(L)$. The \textit{deficiency subspaces} and the \textit{deficiency  numbers} of $L$ are denoted by $\mathcal{N}_{\pm}(L):=\ker(L^*\mp iI)$ and  $n_\pm(L):=\dim \mathcal{N}_{\pm}(L)$ respectively. The number of negative eigenvalues of $L$ (counting multiplicities) is denoted by  $n(L)$ (\textit{the Morse index}). The spectrum  and the resolvent set of $L$ are  denoted by $\sigma(L)$ and $\rho(L)$ respectively.

  We denote by  $\mathcal{G}$ the star graph constituted by $N$ half-lines attached to a common vertex $\nu=0$. On the graph we define 
  \begin{equation*}
  L^2(\mathcal{G})=\bigoplus\limits_{j=1}^NL^2(\mathbb{R}_+),\quad H^1(\mathcal{G})=\bigoplus\limits_{j=1}^NH^1(\mathbb{R}_+),\quad H^2(\mathcal{G})=\bigoplus\limits_{j=1}^NH^2(\mathbb{R}_+). 
\end{equation*}  
For instance, the norm in $L^2(\mathcal{G})$ is defined by $$||\bb V||^2_{L^2(\mathcal{G})}=\sum\limits_{j=1}^N||v_j||^2_{L^2(\mathbb{R}_+)},\quad \mathbf{V}=(v_j)_{j=1}^N.$$  
 By  $||\cdot||_2$ and $(\cdot,\cdot)$ we will  denote  the norm  and   the scalar product in  $L^2(\mathcal{G})$.

We also denote by  $L_k^2(\mathcal{G})$ and $\EE$ the spaces 
\begin{equation*}
\begin{split}
L_k^2(\mathcal{G})=\left\{\begin{array}{c}\bb V=(v_j)_{j=1}^N\in L^2(\mathcal{G}): v_1(x)=\dots=v_k(x),\\
v_{k+1}(x)=\dots=v_N(x),\, x>0
\end{array}\right\}, 
\end{split}
\end{equation*}
$$\mathcal{E}=\{\bb V\in H^1(\mathcal{G}): \,  v_1(0)=\dots=v_N(0)\}.$$
On $\mathcal{G}$ we define the following  weighted Hilbert spaces  
 \begin{equation*}
 W^j(\mathcal{G})=\bigoplus\limits_{j=1}^NW^j(\mathbb{R}_+),\quad  W^j(\mathbb{R}_+)=\{f\in H^j(\mathbb{R}_+): x^jf\in L^2(\mathbb{R}_+)\}, \,\, j\in\{1,2\}, 
 \end{equation*}
and the Banach space  $$W(\mathcal{G})=\bigoplus_{j=1}^N W(\mathbb{R}_+),\,\,\, \text{where}\,\,\,W(\mathbb{R}_+)=\{f\in H^1(\mathbb{R_+}): |f|^2 \log|f|^2\in L^1(\mathbb R_+)\}.$$ 

\noindent Using the above notations, we define $$W^1_{\mathcal{E}}(\mathcal{G})=W^1(\mathcal{G})\cap \mathcal{E},\quad W^1_{\mathcal{E},k}(\mathcal{G})=W^1_{\mathcal{E}}(\mathcal{G})\cap L^2_k(\mathcal{G}), \quad W_{\mathcal{E}}(\mathcal{G})=W(\mathcal{G})\cap\mathcal{E}.$$

\section{Well-posedness}
In this section we prove the well-posedness of the Cauchy problem for \eqref{NLS_graph_ger}
in the space $W^1_\mathcal{E}(\mathcal{G})$. 
In \cite{Ard16} the well-posedness was proved in the Banach space $W_\mathcal{E}(\mathcal{G})$. Namely, the author showed the following result.
\begin{proposition}\label{well_log_graph}  For any $\bb U_0\in W_{\mathcal{E}}(\mathcal{G})$ there is a unique  solution \newline $\bb U \in C(\mathbb{R}, W_{\mathcal{E}}(\mathcal{G}))\cap C^1(\mathbb{R}, W'_{\mathcal E}(\mathcal{G}))$ of \eqref{NLS_graph_ger} such that $\bb U(0) = \bb U_0$ and $\sup\limits_{t\in\mathbb{R}}||\bb U(t)||_{W_{\mathcal{E}}(\mathcal{G})} <\infty$. Furthermore, the conservation of energy and charge holds, that is,
$$E(\bb U(t)) = E(\bb U_0),\,\,\, \text{and}\,\,\, Q(\bb U(t))=||\bb U(t)||^2_2=||\bb U_0||^2_2,$$
where the energy $E$ is defined by 
 \begin{equation}\label{energy}
E(\bb V)=\tfrac 1{2}||\bb V'||^2_2-\tfrac 1{2}\sum\limits_{j=1}^N\int\limits_0^\infty|v_j|^2\logg|v_j|^2dx +\tfrac{\alpha}{2}|v_1(0)|^2, \quad \mathbf{V}=(v_j)_{j=1}^N\in W_{\mathcal E}(\mathcal{G}).
\end{equation}
\end{proposition}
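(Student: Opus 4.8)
The plan is to adapt to the graph $\mathcal{G}$ the classical scheme of Cazenave and Haraux for the logarithmic nonlinearity on $\mathbb{R}^n$. Since $\bb H^\alpha_\delta$ is self-adjoint and bounded below on $L^2(\mathcal{G})$, Stone's theorem produces a unitary group $\{e^{-it\bb H^\alpha_\delta}\}_{t\in\mathbb{R}}$ which, commuting with $\bb H^\alpha_\delta$, preserves its form domain $\mathcal{E}$; all solutions will be sought through the Duhamel representation $\bb U(t)=e^{-it\bb H^\alpha_\delta}\bb U_0+i\int_0^te^{-i(t-s)\bb H^\alpha_\delta}\bigl(\bb U\logg|\bb U|^2\bigr)(s)\,ds$. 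The essential difficulty is that $f(z)=z\logg|z|^2$ is not locally Lipschitz near $z=0$, so the contraction mapping principle cannot be applied directly.

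To circumvent this I would regularize the nonlinearity, replacing $f$ by a family $f_\varepsilon$ of globally Lipschitz maps obtained by smoothing the logarithm near the origin and truncating its growth at infinity, arranged so that $f_\varepsilon\to f$ pointwise and the associated energies $E_\varepsilon$ approximate $E$. For each fixed $\varepsilon$ the regularized equation is solved by the standard contraction argument in $C([0,T],\mathcal{E})$, and the local solution $\bb U_\varepsilon$ extends globally by virtue of its own conserved energy and charge. The key point is then to derive bounds on $\bb U_\varepsilon$ uniform in $\varepsilon$: combining conservation of $E_\varepsilon$ and $Q$ with the one-dimensional logarithmic Sobolev inequality, one absorbs $\sum_{j}\int_0^\infty|u_{\varepsilon,j}|^2\logg|u_{\varepsilon,j}|^2\,dx$ into $\tfrac12\|\bb U_\varepsilon'\|_2^2$ with an arbitrarily small constant, while the vertex term $\tfrac{\alpha}{2}|u_{\varepsilon,1}(0)|^2$ (negative when $\alpha<0$) is controlled by the half-line trace estimate $|v(0)|^2\le\eta\|v'\|^2_{L^2(\mathbb{R}_+)}+C_\eta\|v\|^2_{L^2(\mathbb{R}_+)}$. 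This yields $\sup_{\varepsilon,t}\|\bb U_\varepsilon(t)\|_{W_{\mathcal{E}}(\mathcal{G})}<\infty$.

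To pass to the limit I would exploit the distinctive structure of the logarithmic term through the pointwise inequality
$$\left|\im\bigl[(f(z_1)-f(z_2))\,\overline{(z_1-z_2)}\bigr]\right|\le C\,|z_1-z_2|^2,\qquad z_1,z_2\in\mathbb{C}.$$
This holds because $f(z)\bar z=|z|^2\logg|z|^2$ is real, so the left-hand side collapses to $\bigl|\im[z_1\bar z_2]\bigr|\cdot\bigl|\logg\tfrac{|z_1|^2}{|z_2|^2}\bigr|$, which one checks is bounded by $C|z_1-z_2|^2$. Applied to the difference $\bb U_\varepsilon-\bb U_{\varepsilon'}$ (after accounting for the small term $f_\varepsilon-f_{\varepsilon'}$), this estimate together with Gronwall shows that $\{\bb U_\varepsilon\}$ is Cauchy in $C([0,T],L^2(\mathcal{G}))$. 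The strong $L^2$ limit $\bb U$, upgraded by the uniform energy bound and weak compactness to a function in $C(\mathbb{R},W_{\mathcal{E}}(\mathcal{G}))$, solves \eqref{NLS_graph_ger}; reading off $i\partial_t\bb U=\bb H^\alpha_\delta\bb U-\bb U\logg|\bb U|^2$ in $W'_{\mathcal{E}}(\mathcal{G})$ gives $\bb U\in C^1(\mathbb{R},W'_{\mathcal{E}}(\mathcal{G}))$, and conservation of $E$ and $Q$ passes to the limit.

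The same inequality delivers uniqueness: if $\bb U,\bb V$ are two solutions with $\bb U(0)=\bb V(0)$, testing the equation for $\bb U-\bb V$ against $\bb U-\bb V$ and taking imaginary parts yields $\tfrac{d}{dt}\|\bb U-\bb V\|_2^2\le C\|\bb U-\bb V\|_2^2$, so $\bb U\equiv\bb V$ by Gronwall. I expect the main obstacle to lie not in uniqueness but in the passage to the limit at the level of the energy space: the term $-\tfrac12\int|u|^2\logg|u|^2$ is neither convex nor weakly continuous, so recovering the $W$-component of the norm and the sharp conservation of $E$ in the limit requires the uniform $L^1$ bounds on $|u_{\varepsilon,j}|^2\logg|u_{\varepsilon,j}|^2$ together with a uniform-integrability (de la Vall\'ee Poussin) argument rather than mere weak lower semicontinuity.
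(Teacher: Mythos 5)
The paper itself contains no proof of this proposition: it is quoted directly from \cite{Ard16}, whose argument --- as the paper recalls in the proof of Theorem \ref{well_pos_log} --- is precisely the Cazenave--Haraux scheme of approximating the logarithmic nonlinearity by Lipschitz ones, obtaining uniform bounds, and passing to the limit and proving uniqueness via the pointwise inequality $\left|\im\left[\left(z_1\log|z_1|^2-z_2\log|z_2|^2\right)\overline{(z_1-z_2)}\right]\right|\le C|z_1-z_2|^2$ combined with Gronwall's lemma (Lemma 1.1.1 of \cite{CazHar80}). Your proposal is correct and follows essentially this same route, so it matches the proof the paper relies on.
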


Using the above result, we obtain the well-posedness in $W^1_{\mathcal{E}}(\mathcal{G})$. 
\begin{theorem}\label{well_pos_log}
If $\bb U_0\in W^1_{\mathcal{E}}(\mathcal{G})$, there is a unique solution $\bb U(t)$ of \eqref{NLS_graph_ger} such that $\bb U(t)\in C(\mathbb R, W^1_{\mathcal{E}}(\mathcal{G}))$ and $\bb U(0)=\bb U_0$.  Furthermore, the conservation of energy and charge holds. Moreover, if  $\bb U_0\in W^1_{\mathcal{E},k}(\mathcal{G})$, then the solution $\bb U(t)$ to the Cauchy problem   also belongs to  $ W^1_{\mathcal{E},k}(\mathcal{G})$.
\end{theorem}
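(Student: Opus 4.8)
The plan is to deduce the theorem from Proposition \ref{well_log_graph} in three movements: first check that $W^1_{\EE}(\mathcal{G})$ sits inside the energy space $W_{\EE}(\mathcal{G})$, so that the existing theory applies; then propagate the weight $x\bb U\in L^2(\mathcal{G})$ along the flow by a second-moment (virial-type) estimate; and finally use uniqueness together with permutation symmetry to get the invariance of $W^1_{\EE,k}(\mathcal{G})$.

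First I would establish the embedding $W^1(\mathbb{R}_+)\hookrightarrow W(\mathbb{R}_+)$, hence $W^1_{\EE}(\mathcal{G})\subset W_{\EE}(\mathcal{G})$. Fixing $f\in W^1(\mathbb{R}_+)$, I split $\int_0^\infty |f|^2\,|\logg|f|^2|\,dx$ over $\{|f|\ge 1\}$ and $\{|f|<1\}$. On the first set the embedding $H^1(\mathbb{R}_+)\hookrightarrow L^\infty(\mathbb{R}_+)$ gives $\logg|f|^2\le 2\logg\|f\|_{\infty}$, so that piece is bounded by $2\logg\|f\|_{\infty}\,\|f\|_{L^2}^2$. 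On the second set I would use the elementary bound $-t\logg t\le \sqrt{t}$ for $t\in(0,1)$ with $t=|f|^2$, which reduces the integral to $\int_0^\infty|f|\,dx$; since $f\in L^2$ and $xf\in L^2$, Cauchy--Schwarz against $(1+x^2)^{-1/2}$ shows $f\in L^1(\mathbb{R}_+)$, so that piece is finite as well. Thus $\bb U_0\in W_{\EE}(\mathcal{G})$ and Proposition \ref{well_log_graph} yields a unique global solution $\bb U\in C(\mathbb{R},W_{\EE}(\mathcal{G}))$ with $\bb U(0)=\bb U_0$, conserving $E$ and $Q$; since $\bb U(t)$ remains in $W_{\EE}(\mathcal{G})$, these conservation laws hold as stated.

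The core step is to show that the weight is propagated, i.e. $\bb U(t)\in W^1_{\EE}(\mathcal{G})$ for every $t$. Set $\Sigma(t)=\|x\bb U(t)\|_2^2=\sum_{j=1}^N\int_0^\infty x^2|u_j(t,x)|^2\,dx$. Writing $\partial_t u_j=i\partial_x^2 u_j+iu_j\logg|u_j|^2$ from \eqref{NLS_graph_ger}, the nonlinear contribution drops out because $\re(i|u_j|^2\logg|u_j|^2)=0$, and after integration by parts the vertex boundary terms vanish because the weight $x^2$ annihilates them at $\nu=0$; one is led formally to $\frac{d}{dt}\Sigma(t)=4\,\im\sum_{j=1}^N\int_0^\infty x\,\overline{u_j}\,\partial_x u_j\,dx$. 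Cauchy--Schwarz then gives $|\Sigma'(t)|\le 4\,\Sigma(t)^{1/2}\|\bb U'(t)\|_2$, and since the conserved energy \eqref{energy} together with $\sup_t\|\bb U(t)\|_{W_{\EE}(\mathcal{G})}<\infty$ (which controls both the logarithmic term and, via the trace inequality, the boundary term) bounds $\|\bb U'(t)\|_2$ uniformly in $t$, one integrates to get $\Sigma(t)^{1/2}\le \Sigma(0)^{1/2}+C|t|$; in particular $\Sigma(t)<\infty$. The main obstacle is that this computation is only formal for $W_{\EE}(\mathcal{G})$-solutions, so the virial identity must be justified rigorously: I would replace $x^2$ by a family of bounded weights $\theta_R(x)=x^2\chi(x/R)$, differentiate the resulting finite quantities, and pass to the limit $R\to\infty$ using the uniform bound on $\|\bb U'(t)\|_2$ and dominated convergence. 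Strong continuity $t\mapsto x\bb U(t)$ in $L^2(\mathcal{G})$ then follows by combining the continuity of $\Sigma$ with the weak $L^2$-continuity of $x\bb U(t)$ (a consequence of the $H^1$-continuity contained in the $C(\mathbb{R},W_{\EE}(\mathcal{G}))$ regularity), since weak convergence together with convergence of norms yields strong convergence; with the $H^1$-continuity this gives $\bb U\in C(\mathbb{R},W^1_{\EE}(\mathcal{G}))$, and uniqueness is inherited from Proposition \ref{well_log_graph}.

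Finally, to obtain invariance of $W^1_{\EE,k}(\mathcal{G})$, I would invoke uniqueness together with the permutation symmetry of the problem. Both $\bb H^\alpha_\delta$ and the componentwise nonlinearity commute with any permutation $\pi$ of the edges $\{1,\dots,k\}$ among themselves and of $\{k+1,\dots,N\}$ among themselves, and $L^2_k(\mathcal{G})$ is exactly the set of vectors fixed by all such $\pi$. If $\bb U_0\in W^1_{\EE,k}(\mathcal{G})$ and $\pi$ is such a permutation, then $\pi\bb U(t)$ also solves \eqref{NLS_graph_ger} with datum $\pi\bb U_0=\bb U_0$, so uniqueness forces $\pi\bb U(t)=\bb U(t)$ for all $t$; hence $\bb U(t)\in L^2_k(\mathcal{G})$, and combined with $\bb U(t)\in W^1_{\EE}(\mathcal{G})$ we conclude $\bb U(t)\in W^1_{\EE,k}(\mathcal{G})$.
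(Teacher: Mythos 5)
Your proposal is correct, and its overall skeleton coincides with the paper's: embed $W^1_{\EE}(\mathcal{G})\subset W_{\EE}(\mathcal{G})$ so that Proposition~\ref{well_log_graph} applies, propagate the weight $x\bb U(t)\in L^2(\mathcal{G})$, upgrade to continuity in $t$, and finally treat the symmetric subspace. The difference is that where the paper cites, you prove, and at two steps you take a genuinely different route. For the embedding, the paper invokes Lemma~3.1 of Angulo--Goloshchapova (NoDEA 2017); your splitting over $\{|f|\ge 1\}$ and $\{|f|<1\}$ with $-t\logg t\le\sqrt{t}$ and $W^1(\mathbb{R}_+)\subset L^1(\mathbb{R}_+)$ is precisely that lemma's content. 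For the weight propagation and the continuity of $t\mapsto\|x\bb U(t)\|_2^2$, the paper simply cites Lemma~7.6.2 of Cazenave--Haraux--Martel; your truncated virial argument reconstructs what that lemma delivers. For the passage from norm continuity to strong $L^2$-continuity of $x\bb U(t)$, the paper extracts an a.e.-convergent subsequence and applies the Brezis--Lieb lemma, whereas you use weak $L^2$-convergence plus convergence of norms in a Hilbert space; both are standard, and yours avoids the subsequence bookkeeping. The sharpest divergence is the $W^1_{\EE,k}$-invariance: the paper goes back inside Ardila's approximation scheme and uses that the linear group $e^{-it\bb H^\alpha_\delta}$ preserves $\mathcal{E}_k$, while you observe that the equation is equivariant under edge permutations preserving $\{1,\dots,k\}$ and $\{k+1,\dots,N\}$, that $L^2_k(\mathcal{G})$ is exactly their common fixed-point set, and that uniqueness from Proposition~\ref{well_log_graph} (whose solution class is permutation-invariant) forces $\pi\bb U(t)=\bb U(t)$. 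This symmetry argument is cleaner and independent of how the solution was constructed; what the paper's construction-based argument buys instead is information that survives even in settings where uniqueness is only known in a narrower, possibly non-symmetric class.

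One technical refinement to your virial step: for the Gronwall inequality on $\Sigma_R(t)=\sum_j\int_0^\infty\theta_R|u_j|^2\,dx$ to close \emph{uniformly in} $R$, you need $|\theta_R'|\le C\,\theta_R^{1/2}$ with $C$ independent of $R$; this fails for a generic cutoff $\theta_R(x)=x^2\chi(x/R)$ near the set where $\chi$ vanishes but $\chi'$ does not. The standard fix is to take $\chi=\eta^2$ for a smooth cutoff $\eta$, or $\theta_R=\zeta_R^2$ with $\zeta_R(x)=x$ for $x\le R$, $\zeta_R$ bounded and $|\zeta_R'|\le C$. With that choice your estimate $\Sigma_R(t)^{1/2}\le\Sigma(0)^{1/2}+C|t|$ holds uniformly in $R$, and the limit $R\to\infty$ goes through by monotone convergence. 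This is a fixable detail, not a gap in the strategy.
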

\begin{proof}
By \cite[Lemma 3.1]{AngGol17}, we get $W^1_{\mathcal{E}}(\mathcal{G})\subset W_{\mathcal{E}}(\mathcal{G})$, and, therefore, $\bb U_0\in W_{\mathcal{E}}(\mathcal{G})$. By Proposition \ref{well_log_graph}, we get the uniqueness of the solution in $W_{\mathcal{E}}(\mathcal{G})$ and the conservation of energy and charge. Note also that the solution $\bb U(t)$ belongs to $W^1_{\mathcal{E}}(\mathcal{G})$ due to \cite[Lemma 7.6.2]{Caz98}.

Let us prove the continuity in $t$.   Assume that $t_n\underset{n\rightarrow\infty}{\longrightarrow} t$. Arguing as in the proof of \cite[Theorem 2.1]{CazHar80}, we can show that  $\bb U(t)\in C(\mathbb{R}, H^1(\mathcal{G}))$. Hence, $\bb U(t_n)\underset{n\rightarrow\infty}{\longrightarrow} \bb U(t)$ in $H^1(\mathcal{G})$ and we can  assume that $\bb U(t_n)\underset{n\rightarrow\infty}{\longrightarrow} \bb U(t)$ a.e. in $\mathcal{G}$.
It remains to prove $\bb U(t)\in C(\mathbb R, L^2(x^2, \mathcal{G}))$ ($L^2(x^2, \mathcal{G})$ denotes weighted $L^2$-space).  By \cite[Lemma 7.6.2]{Caz98}, we get that the function $t\mapsto ||x\bb U(t)||_{2}^2$ is continuous on $\mathbb{R}$. Thus, $||x\bb U(t_n)||_{2}^2\underset{n\rightarrow\infty}{\longrightarrow}||x\bb U(t)||_{2}^2$. Having additionally almost  everywhere convergence of $x\bb U(t_n)$ to $x\bb U(t)$, we get from Brezis-Lieb Lemma in \cite{BL}
$$
||x\bb U(t_n)-x\bb U(t)||_{2}^2\underset{n\rightarrow\infty}{\longrightarrow}0,
$$
which implies $\bb U(t)\in  C(\mathbb R, W^1_{\mathcal{E}}(\mathcal{G})).$

The last assertion of the theorem follows  as the solution to the Cauchy problem for \eqref{NLS_graph_ger} was obtained by approximation procedure in \cite{Ard16} (approximating sequence consists of the solutions to the Cauchy problem for the reduced nonlinear equation with Lipschitz  continuous nonlinearity) and the  evolution group $e^{-it\bb H_\delta^\alpha}$ preserves the space $$\mathcal{E}_k=\mathcal{E}\cap L^2_k(\mathcal{G}).$$ See the proof of Lemma 2.3  in \cite{AngGol17a} (or Theorem 3.4 in \cite{AngGol16}) for the detailed  explanation of this fact. 

\end{proof}

The properties of the energy functional $E$ are essential for the investigation of the orbital stability. For example, in $\mathcal{E}$ the energy  functional is ill-defined. From \cite[Lemma 2.6]{Caz83} it follows that $E$ is continuously differentiable in $W_{\mathcal{E}}(\mathcal{G})$. Below we prove its continuity in the space $W^1_{\mathcal{E}}(\mathcal{G})$.
\begin{proposition}\label{cont_ener}
The energy functional $E$  defined by \eqref{energy} is continuous in $W^1_{\mathcal{E}}(\mathcal{G})$. 
\end{proposition}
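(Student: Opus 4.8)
The plan is to split $E$ into its three constituents and treat them separately, the only delicate one being the logarithmic term. Write
$$E(\bb V)=\tfrac12\|\bb V'\|_2^2-\tfrac12 F(\bb V)+\tfrac{\alpha}{2}|v_1(0)|^2,\qquad F(\bb V)=\sum_{j=1}^N\int_0^\infty|v_j|^2\logg|v_j|^2\,dx.$$
Since the $W^1_{\EE}(\mathcal{G})$-norm dominates the $H^1(\mathcal{G})$-norm, a convergence $\bb V_n\to\bb V$ in $W^1_{\EE}(\mathcal{G})$ forces $\bb V_n\to\bb V$ in $H^1(\mathcal{G})$; the kinetic term is then continuous because $\|\bb V_n'\|_2\to\|\bb V'\|_2$, and the boundary term is continuous because the point evaluation $\bb V\mapsto v_1(0)$ is bounded on $H^1(\mathbb{R}_+)$ via the embedding $H^1(\mathbb{R}_+)\hookrightarrow L^\infty(\mathbb{R}_+)$. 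Thus the whole statement reduces to the continuity of $F$.

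To handle $F$ I would decompose the profile $\Theta(s):=s\log s$ (with $\Theta(0)=0$) as $\Theta=\Theta_++\Theta_-$, where $\Theta_+(s)=\Theta(s)\mathbf 1_{\{s\ge1\}}$ collects the nonnegative large-amplitude contribution and $\Theta_-(s)=\Theta(s)\mathbf 1_{\{s\le1\}}$ the nonpositive small-amplitude one, so that $F(\bb V)=\sum_j\int_0^\infty(\Theta_++\Theta_-)(|v_j|^2)\,dx$. For the $\Theta_+$-part one uses, for an arbitrarily small $\varepsilon>0$, the bound $0\le\Theta_+(|z|^2)\le C_\varepsilon|z|^{2+2\varepsilon}$; since $H^1(\mathbb{R}_+)\hookrightarrow L^{2+2\varepsilon}(\mathbb{R}_+)$ and the superposition operator $z\mapsto\Theta_+(|z|^2)$ is continuous from $L^{2+2\varepsilon}$ into $L^1$ by the standard Krasnoselskii theory, this part is already continuous on $H^1(\mathcal{G})$, hence on $W^1_{\EE}(\mathcal{G})$.

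The $\Theta_-$-part is where the weight is indispensable and where I expect the main obstacle. On $\{|z|\le1\}$ one has $|\Theta_-(|z|^2)|=-|z|^2\log|z|^2\le C_\delta|z|^{2-2\delta}$ for any $\delta\in(0,\tfrac23)$, and the exponent $2-2\delta<2$ makes integrability at infinity fail for a generic $H^1$-function; this is precisely why the bare space $\EE$ is insufficient and the weighted space $W^1$ enters. I would pass to the limit by Vitali's convergence theorem. From $W^1_{\EE}$-convergence one gets $v_{n,j}\to v_j$ uniformly (via $H^1(\mathbb{R}_+)\hookrightarrow L^\infty$), whence $\Theta_-(|v_{n,j}|^2)\to\Theta_-(|v_j|^2)$ pointwise, while the uniform $L^\infty$-bound gives uniform integrability on every bounded interval $[0,R]$. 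The tail is controlled by Hölder's inequality with exponents $\tfrac1{1-\delta}$ and $\tfrac1\delta$,
$$\int_R^\infty|v_{n,j}|^{2-2\delta}\,dx=\int_R^\infty(x|v_{n,j}|)^{2-2\delta}x^{-(2-2\delta)}\,dx\le\|x v_{n,j}\|_2^{2-2\delta}\Big(\int_R^\infty x^{-\frac{2-2\delta}{\delta}}\,dx\Big)^{\delta},$$
which tends to $0$ uniformly in $n$ as $R\to\infty$, because $\|x v_{n,j}\|_2$ stays bounded along the convergent sequence and $(2-2\delta)/\delta>1$. Vitali's theorem then yields $\int_0^\infty\Theta_-(|v_{n,j}|^2)\,dx\to\int_0^\infty\Theta_-(|v_j|^2)\,dx$, which finishes the continuity of $F$ and hence of $E$. (One could alternatively invoke that $E\in C^1(W_{\EE}(\mathcal{G}))$ together with the inclusion $W^1_{\EE}(\mathcal{G})\subset W_{\EE}(\mathcal{G})$, but verifying that this inclusion is continuous for the relevant norms requires exactly the weighted tail estimate above, so the direct route is the honest one.)
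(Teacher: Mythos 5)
Your proof is correct, but it follows a genuinely different route from the paper's. The paper attacks the difference $\bigl||v_{j,n}|^2\logg|v_{j,n}|^2-|v_j|^2\logg|v_j|^2\bigr|$ directly, using the pointwise inequality of Cazenave--Haraux, $\bigl||f|^2\logg|f|^2-|g|^2\logg|g|^2\bigr|\le\bigl(1+|\logg|f|^2|\bigr)\bigl||f|^2-|g|^2\bigr|$ for $|f|\ge|g|$, splitting $\mathbb{R}_+$ according to whether $|v_{j,n}|\ge|v_j|$ or not, and then applying Cauchy--Schwarz; the resulting prefactor $\int\bigl(1+|\logg|v_{j,n}|^2|\bigr)^2(|v_{j,n}|+|v_j|)^2dx$ is shown to be finite using the bound $|\logg|v|^2|\le|v|^{-1/2}$ for small $|v|$ together with $v_{j,n}\in L^1(\mathbb{R}_+)$ --- and it is exactly this $L^1$-inclusion that is the paper's use of the weight $x v\in L^2$. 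You instead split the nonlinearity itself, $\Theta=\Theta_++\Theta_-$, by amplitude: the large-amplitude part is handled by polynomial growth plus Krasnoselskii continuity of the superposition operator on $L^{2+2\varepsilon}$, which needs no weight at all, while the small-amplitude part is passed to the limit by Vitali, with tightness supplied by the weighted H\"older estimate $\int_R^\infty|v_{n,j}|^{2-2\delta}dx\le\|xv_{n,j}\|_2^{2-2\delta}\bigl(\int_R^\infty x^{-(2-2\delta)/\delta}dx\bigr)^\delta$ (your restriction $\delta<\tfrac23$ is exactly what makes the last integral converge). Both proofs invoke the weight for the same underlying reason --- the logarithmic singularity of small tails at spatial infinity --- but the paper's argument is quantitative, yielding a bound of the nonlinear difference by $O(\|\,|v_{j,n}|-|v_j|\,\|_2)$ with a uniformly bounded prefactor, whereas yours is softer but cleaner in isolating precisely which piece of $E$ actually requires $W^1$ rather than $H^1$; your closing remark is also accurate, since the paper's alternative (compose $E\in C^1(W_{\EE}(\mathcal{G}))$ with the inclusion $W^1_{\EE}(\mathcal{G})\subset W_{\EE}(\mathcal{G})$) would indeed require establishing continuity of that inclusion, which the paper does not do and which amounts to the same tail estimate.
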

\begin{proof}
Let $\bb V_n\underset{n\rightarrow\infty}{\longrightarrow} \bb V$ in $W^1_{\mathcal{E}}(\mathcal{G})$. It is easily seen that $$||\bb V'_n||_2^2+\alpha|v_{1,n}(0)|^2\underset{n\rightarrow\infty}{\longrightarrow}||\bb V'||_2^2+\alpha|v_{1}(0)|^2.$$
Hence, we need to prove the  continuity of the nonlinearity part of the functional $E$.
Basically we will use the following inequality (see \cite[Lemma 2.4.3]{CazHar80}) for  $|f|\geq |g|$
\begin{equation}\label{base_log}
\left||f|^2\log|f|^2-|g|^2\log|g|^2\right|\leq \left(1+|\log|f|^2|\right)\left||f|^2-|g|^2\right|. 
\end{equation}
To simplify the notation for the sets we  write $\{\text{condition}\}$ instead of $\{x\in\mathbb{R}:\text{condition}\}$.
From \eqref{base_log} we obtain
\begin{equation}\label{base_log1}
\begin{split}
&\int\limits_{\mathbb{R}_+}\left||v_{j,n}|^2\log|v_{j,n}|^2-|v_{j}|^2\log|v_{j}|^2\right|dx\leq \int\limits_{\{|v_{j,n}|\geq |v_{j}|\}}\left(1+|\log|v_{j,n}|^2|\right)\left||v_{j,n}|^2-|v_{j}|^2\right|dx \\&+ \int\limits_{\{|v_{j,n}|\leq |v_{j}|\}}\left(1+|\log|v_{j}|^2|\right)\left||v_{j,n}|^2-|v_{j}|^2\right|dx.
\end{split}
\end{equation}
Let us show that the right hand side of \eqref{base_log1} tends to zero. We will estimate the first one expression of the right hand side of \eqref{base_log1} since the analysis for the second one is analogous. 
By the Cauchy-Schwarz inequality we get
\begin{equation}
\begin{split}\label{base_log2}
&\int\limits_{\{|v_{j,n}|\geq |v_{j}|\}}\left(1+|\log|v_{j,n}|^2|\right)\left||v_{j,n}|^2-|v_{j}|^2\right|dx\\&\leq\left(\int\limits_{\{|v_{j,n}|\geq |v_{j}|\}}\left(1+|\log|v_{j,n}|^2|\right)^2(|v_{j,n}|+|v_{j}|)^2dx \right)^{\tfrac1{2}} \left(\int\limits_{\mathbb{R}_+}(|v_{j,n}|-|v_{j}|)^2dx\right)^{\tfrac1{2}}.
\end{split}
\end{equation}
 Let $\varepsilon>0$ be  such that $|\log|v_{j,n}|^2|\leq \frac{1}{|v_{j,n}|^{1/2}}$ for $|v_{j,n}|<\varepsilon$. Observing that the set $\{|v_{j,n}|\geq \varepsilon\}$ is contained in some bounded interval (due to $v_{j,n}\in H^1(\mathbb{R}_+)$) and recalling that $v_{j,n}\in L^1(\mathbb{R}_+)$ (see the proof of Lemma 3.1 in \cite{AngGol17}), we obtain
 \begin{equation*}
 \begin{split}
 &\int\limits_{\{|v_{j,n}|\geq |v_{j}|\}}\left(1+|\log|v_{j,n}|^2|\right)^2(|v_{j,n}|+|v_{j}|)^2dx\leq \int\limits_{\{|v_{j,n}|\geq |v_{j}|\}\cap\{|v_{j,n}|\geq \varepsilon\}}C_1|v_{j,n}|^2dx\\&+\int\limits_{\{|v_{j,n}|\geq |v_{j}|\}\cap\{|v_{j,n}|<\varepsilon\}}\left(1+\frac{1}{|v_{j,n}|^{1/2}}\right)^24|v_{j,n}|^2dx\\&\leq C_2+4\int\limits_{\mathbb{R}_+}\left(|v_{j,n}|^2+2|v_{j,n}|^{3/2}+|v_{j,n}|\right)dx<\infty,
 \end{split}
 \end{equation*}
 where $C_1$ and $C_2$  are positive constants. It is worth noting that $\int\limits_{\mathbb{R}_+}|v_{j,n}|^{3/2}dx<\infty$ follows from the Cauchy-Schwarz inequality and the inclusion $v_{j,n}\in L^1(\mathbb{R}_+)$.
 
 Finally, from  \eqref{base_log1}-\eqref{base_log2} we get 
 $$\int\limits_{\mathbb{R}_+}|v_{j,n}|^2\log|v_{j,n}|^2dx \underset{n\rightarrow\infty}{\longrightarrow} \int\limits_{\mathbb{R}_+}|v_{j}|^2\log|v_{j}|^2dx.$$ 
\end{proof}

\section{The proof of the main theorem}

 Crucial role in
the orbital stability analysis is played by the symmetries of  NLS equation \eqref{NLS_graph_ger}. The basic symmetry associated to the  mentioned equation is  phase-invariance (in particular, translational invariance  does  not hold due to the defect). Thus,  it is reasonable  to define orbital stability as follows.  
\begin{definition}\label{def_stabil}
 The standing wave $\mathbf U(t, x) = e^{i\omega t}\mathbf{\Phi}(x)$ is said to be \textit{orbitally stable} in a Banach space $X$  if for any $\varepsilon > 0$ there exists $\eta > 0$ with the following property: if $\mathbf U_0 \in X$  satisfies $||\mathbf U_0-\mathbf{\Phi}||_{X} <\eta$,
then the solution $\mathbf U(t)$ of \eqref{NLS_graph_ger} with $\mathbf U(0) = \mathbf U_0$ exists for any $t\in \mathbb{R}$, and
\[\sup\limits_{t\in \mathbb{R}}\inf\limits_{\theta\in\mathbb{R}}||\mathbf U(t)-e^{i\theta}\mathbf{\Phi}||_{X} < \varepsilon.\]
Otherwise, the standing wave $\mathbf U(t, x) = e^{i\omega t}\mathbf{\Phi}(x)$ is said to be \textit{orbitally unstable} in $X$.
\end{definition}
Below we will define spectral stability/instability of  $e^{i\omega t}\mathbf{\Phi}(x)$. We assume that $\bb \Phi(x)$ belongs to the family of profiles defined by  \eqref{Phi_vect_log}.  Change of variables $\bb U(t,x) = e^{i\omega t}(\bb \Phi(x) + \bb V(t,x))$ in \eqref{NLS_graph_ger}  leads to 
\begin{equation}\label{linear_A}
\partial_t \bb V(t,x) = \bb A\bb V(t,x) + F(\bb V(t,x)),\end{equation}
where $\bb A$ is the linearized operator defined by 
\begin{equation*}\label{def:A}
\bb A\bb V=-i\left\{\bb H_\delta^\alpha\bb V+\omega \bb V
-\bb V
\log\bb\Phi^2-\bb V-\overline{\bb V}\right\},
\end{equation*}
and  $F(\bb V)$ is the nonlinear term given by 
$$F(\bb V)=i\left\{(\bb\Phi+\bb V)\log|\bb\Phi+\bb V|^2-   \bb V
\log\bb\Phi^2 -\bb V-\overline{\bb V}- \bb \Phi\log\bb \Phi^2 \right\}.$$

\begin{definition}
The standing wave $\mathbf U(t, x) = e^{i\omega t}\mathbf{\Phi}(x)$ is said to be \textit{spectrally stable} if $$\sigma(\bb A)\subset i\mathbb{R}.$$
Otherwise, the standing wave $\mathbf U(t, x) = e^{i\omega t}\mathbf{\Phi}(x)$ is said to be \textit{spectrally unstable}.
\end{definition}

It is standard to show that $\sigma(\bb A)$ is symmetric with respect  to the real and imaginary axes (see, for instance, \cite[Lemma 5.6]{GrilSha90}). Hence,  it is equivalent to say that  $e^{i\omega t}\mathbf{\Phi}(x)$ is spectrally unstable if $\sigma(\bb A)$ contains some point $\lambda$ with  $\Re(\lambda)>0.$

 It is widely known  that the spectral instability is  a key prerequisite to show nonlinear (orbital) instability in numerous works (see \cite{GrilSha90, ShaStr00} and references therein). However, it is highly nontrivial problem whether spectral instability implies orbital instability. 
\subsection{Stability framework}
 
 In this subsection we introduce  basic objects of the stability framework. 
The action functional for NLS-log-$\delta$ equation is given by
 \begin{equation*}\label{act_log}  S(\bb V)=\tfrac 1{2}||\bb V'||^2_2+\tfrac{(\omega+1)}{2}||\bb V||^2_2 -\tfrac 1{2}\sum\limits_{j=1}^N\int\limits_0^\infty|v_j|^2\log|v_j|^2dx+\tfrac{\alpha}{2}|v_1(0)|^2,\quad \bb V\in W_{\mathcal{E}}(\mathcal{G}).
\end{equation*}
The profiles $\bb\Phi_k^\alpha$  defined by \eqref{Phi_vect_log} are the critical points of the action functional.
 Indeed, for $\bb U=(u_j)_{j=1}^N, \bb V=(v_j)_{j=1}^N\in W_{\mathcal{E}}(\mathcal{G})$,
 \begin{equation*}
 \begin{split}
& S'(\bb U)\bb V=\frac{d}{dt}S(\bb U+t\bb V)|_{t=0}\\&=\re\left[\sum\limits_{j=1}^N\left(\,\int\limits_{\mathbb{R}_+}u'_j\overline{v'_j}dx-\int\limits_{\mathbb{R}_+}u_j\overline{v_j}(\log|u_j|^2-\omega)dx\right)+\alpha u_1(0)\overline{v_1(0)}\right].
 \end{split}
 \end{equation*}
 Obviously $S'(\bb\Phi^\alpha_k)=\bb 0$.
 Below we will use the notation $\bb \Phi_k:=\bb\Phi_k^\alpha$.
 
 The basic ingredient of stability study is the operator $\bb A_k$ introduced in \eqref{linear_A} (index $k$ means that we need to linearize equation around each $\bb \Phi_k$). 
  To express $\bb A_k$ it is convenient to split $\bb V\in W^1_{\mathcal{E}}(\mathcal{G})$ into real and imaginary parts: $\bb V=\bb V_1+i\bb V_2$. Then  we get 
 $$\bb A_k\bb V= \bb T_{2,k}^\alpha\bb V_2-i\bb T_{1,k}^\alpha\bb V_1,$$
 where
\begin{equation*}\label{T_k}
\begin{split}
&\bb T_{1,k}^\alpha=\diag\left(\underset{\bf 1}{-\frac{d^2}{dx^2}+V_{k}^-(x)},\dots,\underset{\bf k}{-\frac{d^2}{dx^2}+V_{k}^-(x)}, \underset{\bf k+1}{-\frac{d^2}{dx^2}+V_{k}^+(x)},\dots, \underset{\bf N}{-\frac{d^2}{dx^2}+V_{k}^+(x)}\right),\\
&\bb T_{2,k}^\alpha=\diag\left(\underset{\bf 1}{-\frac{d^2}{dx^2}+W_{k}^-(x)},\dots,\underset{\bf k}{-\frac{d^2}{dx^2}+W_{k}^-(x)}, \underset{\bf k+1}{-\frac{d^2}{dx^2}+W_{k}^+(x)},\dots, \underset{\bf N}{-\frac{d^2}{dx^2}+W_{k}^+(x)}\right),\\
&\dom(\bb T_{1,k}^\alpha)=\dom(\bb T_{2,k}^\alpha)=\left\{\mathbf{V}\in W^2(\mathcal{G}):  v_1(0)=\dots=v_N(0), \sum\limits_{j=1}^N  v_j'(0)=\alpha v_1(0) \right\},
\end{split}
\end{equation*}
with $V_{k}^{\pm}(x)= \left(x\pm\frac{\alpha}{2k-N}\right)^2-3$  and $W_{k}^{\pm}(x)= \left(x\pm\frac{\alpha}{2k-N}\right)^2-1$. 
Finally, we get formally \begin{equation*}
\bb A_k =\left(\begin{array}{cc}
\bb 0 & \bb I\\
-\bb I & \bb 0
\end{array}\right) \bb H_k^\alpha,
 \end{equation*} where 
$\bb H_{k}^\alpha=\left(\begin{array}{cc}
\bb T_{1,k}^\alpha & \bb 0\\
\bb 0& \bb T_{2,k}^\alpha
\end{array}\right)$, moreover, $\bb 0$ and  $\bb I$ are zero and identity $N\times N$ matrices. 
  Observe also that   $\bb H_k^\alpha$ is the self-adjoint operator associated with $S''(\bb \Phi_k)$ (see \cite[Section 6]{AdaNoj14} for details). 
  
 Noting that $\partial_\omega||\bb \Phi_k||_2^2>0$, and combining \cite[Theorem 3.5]{GrilSha87}  with \cite[Theorem 5.1]{GrilSha90}, we can formulate the stability/instability theorem  for the NLS-log-$\delta$ equation.
\begin{theorem}\label{stabil_graph_log}
 Let $\alpha\neq 0$,\, $k=1,\dots,\left[\tfrac{N-1}{2}\right]$,  and $n(\bb H^\alpha_{k}|_{L_k^2(\mathcal{G})})$ be the number of negative eigenvalues of $\bb H^\alpha_{k}$ in $L_k^2(\mathcal{G})$. Suppose also  that 
 
 $1)$\,\,$\ker(\bb T^\alpha_{2,k})=\Span\{\bb\Phi_k\}$,
 
 $2)$\,\,$\ker(\bb T^\alpha_{1,k})=\{\bb 0\}$, 
 
  $3)$\,\, the negative spectrum of $\bb T^\alpha_{1,k}$ and  $\bb T^\alpha_{2,k}$ consists of a finite number of negative eigenvalues (counting multiplicities),   
  
  $4)$\,\, the rest of  the  spectrum of $\bb T^\alpha_{1,k}$ and  $\bb T^\alpha_{2,k}$ is positive and bounded away from zero. 
  \
  Then the following assertions hold.
\begin{itemize}
  \item[$(i)$] If $n(\bb H^\alpha_{k}|_{L_k^2(\mathcal{G})})=1$, then   the standing wave $e^{i\omega t}\bb\Phi_k$  is orbitally  stable in $W^1_{\EE,k}(\mathcal{G})$.
   \item[$(ii)$]  If $n(\bb H^\alpha_{k}|_{L_k^2(\mathcal{G})})=2$, then   the standing wave $e^{i\omega t}\bb\Phi_k$  is spectrally  unstable.
\end{itemize}
\end{theorem}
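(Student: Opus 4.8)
The plan is to recognize the statement as a direct application of the abstract Hamiltonian stability theory of Grillakis, Shatah and Strauss, with hypotheses 1)--4) supplying exactly the spectral input that theory demands. First I would assemble the structural ingredients, most of which are already recorded above. The flow \eqref{NLS_graph_ger} is Hamiltonian with conserved energy $E$ from \eqref{energy} and conserved charge $Q(\bb V)=\|\bb V\|_2^2$; the one-parameter phase symmetry $\bb V\mapsto e^{i\theta}\bb V$ has generator $i\bb\Phi_k$; and $S(\bb V)=E(\bb V)+\tfrac{\omega+1}{2}Q(\bb V)$ makes $\bb\Phi_k$ a critical point of $S$ with Hessian $\bb H_k^\alpha=S''(\bb\Phi_k)$, while $\bb A_k$ is in the Hamiltonian form $\bb A_k=J\bb H_k^\alpha$ with $J$ the skew factor in its displayed representation. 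Global well-posedness and invariance of the symmetric space $W^1_{\EE,k}(\mathcal{G})$ under the flow come from Theorem \ref{well_pos_log}, so the dynamical setting of the abstract theory holds on the reduced space.

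Second, I would check that 1)--4) are precisely the spectral hypotheses of the framework. Splitting $\bb V=\bb V_1+i\bb V_2$ block-diagonalizes $\bb H_k^\alpha=\bb T_{1,k}^\alpha\oplus\bb T_{2,k}^\alpha$. Then 1) and 2) give $\ker\bb H_k^\alpha=\{\bb 0\}\oplus\Span\{\bb\Phi_k\}$, which in the $(\bb V_1,\bb V_2)$ coordinates is exactly the one-dimensional span of the generator $i\bb\Phi_k$; this is the required nondegeneracy condition. Hypotheses 3) and 4) say that $\bb H_k^\alpha$ has finite Morse index and that, off the kernel, its spectrum is positive and bounded away from $0$, which is the spectral-gap/coercivity condition needed to run the abstract arguments. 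Restricting to $L^2_k(\mathcal{G})$ is justified because both $\bb\Phi_k$ and the flow respect the permutation symmetry built into $L^2_k(\mathcal{G})$; hence the index governing stability is $n(\bb H_k^\alpha|_{L^2_k(\mathcal{G})})$.

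Third, I would compute the slope function $d(\omega)=S(\bb\Phi_{k,\omega})$. Using $\partial_\omega S=\tfrac12\|\cdot\|_2^2$ together with $S'(\bb\Phi_{k,\omega})=0$ to kill the term involving $\partial_\omega\bb\Phi_{k,\omega}$, one gets $d'(\omega)=\tfrac12\|\bb\Phi_k\|_2^2$ and $d''(\omega)=\tfrac12\,\partial_\omega\|\bb\Phi_k\|_2^2$. Here the dependence on $\omega$ in \eqref{Phi_vect_log} enters only through the scalar prefactor $e^{(\omega+1)/2}$, so $\|\bb\Phi_{k,\omega}\|_2^2=e^{\omega+1}\cdot\mathrm{const}$ and $\partial_\omega\|\bb\Phi_k\|_2^2=\|\bb\Phi_k\|_2^2>0$; thus $d''(\omega)>0$ and the number of positive directions of the slope is $p=1$.

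Finally, I would invoke the index criterion. In case $(i)$, $n(\bb H_k^\alpha|_{L^2_k})=1=p$, so \cite[Theorem 3.5]{GrilSha87} yields orbital stability in $W^1_{\EE,k}(\mathcal{G})$. In case $(ii)$, $n(\bb H_k^\alpha|_{L^2_k})=2$, so $n-p=1$ is odd, and \cite[Theorem 5.1]{GrilSha90} forces an eigenvalue $\lambda$ of $\bb A_k$ with $\re\lambda>0$, i.e. spectral instability. I expect the main obstacle to be not the index bookkeeping but the justification that the GSS machinery genuinely applies in the weighted space $W^1_{\EE,k}(\mathcal{G})$: the logarithmic term fails to be $C^1$ on $H^1(\mathcal{G})$, which is exactly why the linearization must be realized on the weighted space, and one must confirm that the flow, the invariants and the Hessian meet the regularity and spectral-gap demands of \cite{GrilSha87,GrilSha90} in that setting, together with the legitimacy of passing to the symmetric subspace $L^2_k(\mathcal{G})$.
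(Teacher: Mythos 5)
Your proposal is correct and follows essentially the same route as the paper: the paper's proof is precisely the observation that $\partial_\omega\|\bb\Phi_k\|_2^2>0$ (your slope computation, giving $p=1$) combined with citing \cite[Theorem 3.5]{GrilSha87} for the case $n(\bb H^\alpha_k|_{L^2_k(\mathcal{G})})=1$ and \cite[Theorem 5.1]{GrilSha90} for the case $n(\bb H^\alpha_k|_{L^2_k(\mathcal{G})})=2$, with hypotheses 1)--4) serving as the spectral input. Your closing caveats are also exactly the points the paper flags in its remark: the continuity of $E$ in $W^1_{\EE}(\mathcal{G})$ (Proposition \ref{cont_ener}) is what licenses the stability argument, and only spectral (not orbital) instability is claimed because the extra semigroup/regularity estimates of \cite{GrilSha90} are not verified.
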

\begin{remark}
\begin{itemize}
\item[$(i)$] Note that for the proof of the item $(i)$ continuity of the energy functional $E$ proved in Proposition \ref{cont_ener} is essential (see the proof of Theorem 3.5 in \cite{GrilSha87}).
\item[$(ii)$] In item $(ii)$ we affirm only spectral instability since to show orbital instability we need to prove some additional nontrivial properties   of NLS-log-$\delta$ equation, for instance, estimate (6.2) in \cite{GrilSha90} for the semigroup $e^{t\bb A_k}$ generated by $\bb A_k$, or the property that
the mapping data-solution associated to NLS-log-$\delta$ equation is of class $C^2$ around $\bb \Phi_k$ (see \cite[Section 2]{HenPer82} for the general idea and 
 \cite{AngNat16} for the particular application). However,  we conjecture that for the operator $\bb A_k$ so-called spectral mapping theorem holds (that is, $\sigma(e^{\bb A_k})=e^{\sigma(\bb A_k)}$) which would imply estimate (6.2) in \cite{GrilSha90} (see, for instance, the discussion in \cite{GeoOht10}). 
\end{itemize}
\end{remark}
\subsection{Spectral properties of $\bb T^\alpha_{1,k}$ and $\bb T^\alpha_{2,k}$}
Below we  describe  the spectrum of the operators $\mathbf{T}_{1,k}^\alpha$ and $\mathbf{T}_{2,k}^\alpha$  which will help us to verify the conditions of Theorem \ref{stabil_graph_log}. Our ideas are based on the extension theory of symmetric operators and the perturbation theory. 

 The main result of this subsection is the following. 

\begin{theorem}\label{7main}  Let $\alpha \neq 0$, $k=1,\dots,\left[\tfrac{N-1}{2}\right]$.  Then the following assertions hold. 
\begin{enumerate}
\item[(i)]  If $\alpha<0$,  then  $n(\bb H_k^\alpha|_{L^2_k(\mathcal G)})=2$.

\item[(ii)]  If  $\alpha>0$, then  $n(\bb H_k^\alpha|_{L^2_k(\mathcal G)})=1$.
\end{enumerate}
\end{theorem}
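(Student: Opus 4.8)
The plan is to exploit the block-diagonal structure $\bb H_k^\alpha=\diag(\bb T_{1,k}^\alpha,\bb T_{2,k}^\alpha)$, which gives $n(\bb H_k^\alpha|_{L^2_k(\mathcal G)})=n(\bb T_{1,k}^\alpha|_{L^2_k(\mathcal G)})+n(\bb T_{2,k}^\alpha|_{L^2_k(\mathcal G)})$, together with the elementary but decisive observation that $V_k^\pm=W_k^\pm-2$, so that $\bb T_{1,k}^\alpha=\bb T_{2,k}^\alpha-2\,\bb I$ on a common domain. Thus everything reduces to locating the spectrum of the single operator $\bb T_{2,k}^\alpha|_{L^2_k(\mathcal G)}$. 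First I would record that $\bb\Phi_k$, whose components are the strictly positive Gaussians in \eqref{Phi_vect_log}, lies in $\ker\bb T_{2,k}^\alpha$ (a direct computation using $a_k(2k-N)=\alpha$ shows it satisfies the vertex condition). Since the form associated with the $\delta$-coupling generates a positivity-preserving semigroup (Beurling--Deny), a strictly positive eigenfunction must be the ground state; hence $0=\min\sigma(\bb T_{2,k}^\alpha|_{L^2_k(\mathcal G)})$ is simple and $n(\bb T_{2,k}^\alpha|_{L^2_k(\mathcal G)})=0$. Consequently $n(\bb T_{1,k}^\alpha|_{L^2_k(\mathcal G)})$ equals the number of eigenvalues of $\bb T_{2,k}^\alpha|_{L^2_k(\mathcal G)}$ in $[0,2)$, and the whole theorem becomes the assertion that this window contains exactly one eigenvalue (namely $0$) for $\alpha>0$ and exactly two for $\alpha<0$.

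To count the eigenvalues in $(0,2)$ I would pass to the reduced two-edge description on $L^2_k(\mathcal G)$, writing elements as pairs $(f,g)$ with weights $k$ and $N-k$, so that $\bb T_{2,k}^\alpha|_{L^2_k(\mathcal G)}$ is the self-adjoint operator generated by the form $k\int_0^\infty(|f'|^2+W_k^-|f|^2)+(N-k)\int_0^\infty(|g'|^2+W_k^+|g|^2)+\alpha|f(0)|^2$ on the $\alpha$-independent form domain $\{f(0)=g(0)\}$ (the condition $kf'(0)+(N-k)g'(0)=\alpha f(0)$ being the natural boundary condition). This makes $\{\bb T_{2,k}^\alpha|_{L^2_k(\mathcal G)}\}_\alpha$ a real-analytic family of type (B), with compact resolvent and hence purely discrete spectrum depending analytically on $\alpha$. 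At the base point $\alpha=0$ both potentials reduce to $x^2-1$ and the coupling becomes Kirchhoff; the decoupled reference operator (Dirichlet at the vertex) and the extension-theoretic bookkeeping then identify the spectrum as the disjoint union of the half-line Neumann eigenvalues $\{0,4,8,\dots\}$ and Dirichlet eigenvalues $\{2,6,10,\dots\}$, i.e.\ $\{0,2,4,\dots\}$, all simple. In particular $\lambda=2$ is attained at $\alpha=0$ by a single, antisymmetric eigenfunction proportional to $\bigl((N-k)\psi_2,-k\psi_2\bigr)$ with $\psi_2(x)\propto xe^{-x^2/2}$.

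The heart of the argument is then to decide on which side of the threshold the analytic branch $\lambda_1(\alpha)$ with $\lambda_1(0)=2$ moves. I would compute $\lambda_1'(0)$ by the Feynman--Hellmann formula for the above form family, differentiating both the $\alpha$-dependent potentials (through $a_k=\alpha/(2k-N)$) and the boundary term $\alpha|f(0)|^2$. Because $\psi_2(0)=0$ the boundary contribution vanishes, and using $\partial_\alpha W_k^\mp=\mp 2x/(2k-N)$ at $\alpha=0$ the two potential contributions combine into
\[\lambda_1'(0)=\frac{2\,k(N-k)\int_0^\infty x\,\psi_2^2\,dx}{\mathcal N}>0,\]
where $\mathcal N$ is the $L^2_k(\mathcal G)$-norm of the eigenfunction; crucially the factor $2k-N$ cancels, leaving a sign-definite positive answer valid for every admissible $k$. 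Hence for small $\alpha<0$ the branch satisfies $\lambda_1(\alpha)<2$ (two eigenvalues in $[0,2)$), while for small $\alpha>0$ it satisfies $\lambda_1(\alpha)>2$ (one eigenvalue in $[0,2)$).

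Finally I would upgrade this local statement to all $\alpha\neq0$ by a continuity/no-crossing argument: the integer $\#\bigl(\sigma(\bb T_{2,k}^\alpha|_{L^2_k(\mathcal G)})\cap[0,2)\bigr)$ can change only when an eigenvalue reaches an endpoint of the window. An eigenvalue reaching $0$ would violate the simplicity of the ground state established above, and an eigenvalue reaching $2$ would produce a nontrivial $\ker\bb T_{1,k}^\alpha$, which is excluded for $\alpha\neq0$ by the explicit check that the decaying eigenvalue-$2$ ansatz $\bigl(c_1(x-a_k)e^{-(x-a_k)^2/2},\,c_2(x+a_k)e^{-(x+a_k)^2/2}\bigr)$ forces $2k-N=0$. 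Therefore the count is constant on each half-line $\{\alpha<0\}$ and $\{\alpha>0\}$, equal to the values found near $0$, giving $n(\bb T_{1,k}^\alpha|_{L^2_k(\mathcal G)})=2$ for $\alpha<0$ and $=1$ for $\alpha>0$, whence assertions (i)--(ii). The main obstacle is the third step: setting up the $\alpha$-family so that Feynman--Hellmann is rigorously applicable despite the simultaneously moving potential and boundary strength, and extracting the sign of $\lambda_1'(0)$; the cancellation of the factor $2k-N$ is precisely what makes the conclusion uniform in $k$.
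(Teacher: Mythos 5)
Your proposal is correct, and at the structural level it is the same proof as the paper's: additivity of the Morse index over the two blocks of $\bb H_k^\alpha$, vanishing of the $\bb T^\alpha_{2,k}$ contribution, a first-order perturbation computation at the Kirchhoff point $\alpha=0$ whose sign-definite coefficient (your cancellation of the factor $2k-N$) is exactly the paper's $\mu_{0,k}>0$ in Proposition \ref{signeigen}, and a continuation in $\alpha$ that cannot degenerate because $\ker(\bb T^\alpha_{1,k})=\{\bb 0\}$ for $\alpha\neq 0$ --- which is precisely the Riesz-projection argument of Proposition \ref{n(L_1)}, so your informal ``no-crossing'' step should be written up in exactly that form. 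The genuine differences are in how you establish the intermediate facts. First, the shift identity $\bb T^\alpha_{1,k}=\bb T^\alpha_{2,k}-2\bb I$ lets you analyze a single operator and recast the index as the count of eigenvalues of $\bb T^\alpha_{2,k}|_{L^2_k(\mathcal G)}$ in the window $[0,2)$; the paper never uses this and treats the blocks separately. Second, you obtain $\bb T^\alpha_{2,k}\geq 0$ with simple kernel $\Span\{\bb\Phi_k\}$ from Beurling--Deny/Perron--Frobenius, whereas the paper proves it by hand via the ground-state factorization identity in Proposition \ref{grafoN2}$(i)$; your route works, but the one citation you must make precise is positivity \emph{improvement} (irreducibility) of the reduced $\delta$-coupling semigroup, not mere positivity preservation, since both the simplicity of the ground state and the orthogonality argument identifying $\bb\Phi_k$ as the ground state rest on it. Third, at $\alpha=0$ you identify the whole reduced spectrum explicitly as $\{0,2,4,\dots\}$ (all simple in $L^2_k(\mathcal G)$) by matching half-line Neumann and Dirichlet oscillator levels; this is more elementary and more informative than the paper's Theorem \ref{spect_L^0_1}, which reaches $n(\bb T_1^0|_{L^2_k(\mathcal G)})=1$ through deficiency indices and Proposition \ref{semibounded}. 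What the paper's heavier extension-theoretic route buys is the machinery reused in the Appendix (Proposition \ref{grafoN'}, Remark \ref{Morse_est}) to bound the Morse index in all of $L^2(\mathcal G)$, which your explicit $L^2_k$ computation does not directly give; what your route buys is a shorter, essentially self-contained treatment of the $k$-symmetric problem in which a single one-parameter family carries the entire argument.
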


The proof of Theorem \ref{7main} is an immediate consequence of  Propositions \ref{grafoN2} and \ref{n(L_1)} below.

 \begin{proposition}\label{grafoN2}
Let $\alpha\neq 0$,  $k=1,\dots,\left[\tfrac{N-1}{2}\right]$. Then the following assertions hold. 
\begin{itemize}
\item[$(i)$]   $\ker(\mathbf{T}^\alpha_{2,k})=\Span\{\mathbf{\Phi}_{k}\}$ and $\mathbf{T}^\alpha_{2,k}\geq 0$.
 \item[$(ii)$] $\ker(\mathbf{T}^\alpha_{1,k})=\{\mathbf{0}\}$. 
    \item[$(iii)$] The spectrum of the operators $\mathbf{T}^\alpha_{1,k}$ and $\mathbf{T}^\alpha_{2,k}$  is discrete.
  \end{itemize}
\end{proposition}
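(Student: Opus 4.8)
The plan is to handle the three claims by edge-wise ODE analysis combined with the two vertex conditions in $\dom(\bb T^\alpha_{i,k})$, deducing the positivity of $\bb T^\alpha_{2,k}$ from a supersymmetric (Darboux) factorization in which the $\delta$-coupling is exactly absorbed by boundary terms. I begin with the discreteness $(iii)$. Both operators are self-adjoint extensions of the minimal symmetric operator $L=\bigoplus_{j=1}^N\bigl(-\tfrac{d^2}{dx^2}+q_j\bigr)$ on $\mathcal G$, whose edge potentials $q_j\in\{V^\pm_k,W^\pm_k\}$ grow like $x^2$; since $+\infty$ is limit point and the vertex endpoint is regular, $L$ has deficiency indices $(N,N)$. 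The decoupled Dirichlet extension ($v_j(0)=0$ on every edge) is self-adjoint and, because $q_j(x)\to+\infty$, has compact resolvent. As $\bb T^\alpha_{i,k}$ is another self-adjoint extension of the same $L$ with finite deficiency indices, Krein's resolvent formula gives a finite-rank difference of resolvents, so the two extensions share the same (empty) essential spectrum; hence $\sigma(\bb T^\alpha_{i,k})$ is discrete.

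For $(i)$ I would compute the quadratic form of $\bb T^\alpha_{2,k}$ on its form domain $\{\bb V\in W^1(\mathcal G):v_1(0)=\dots=v_N(0)=:v_0\}$. Setting $b_j(x)=x-a_k$ for $j\le k$ and $b_j(x)=x+a_k$ for $j>k$ (so $b_j'\equiv1$ and $-\tfrac{d^2}{dx^2}+W_{k,j}=\bigl(-\tfrac{d}{dx}+b_j\bigr)\bigl(\tfrac{d}{dx}+b_j\bigr)$, where $W_{k,j}=W^-_k$ for $j\le k$ and $W_{k,j}=W^+_k$ otherwise), integration by parts on $(0,\infty)$ yields
\begin{equation*}
\sum_{j=1}^N\int_0^\infty\bigl(|v_j'|^2+W_{k,j}|v_j|^2\bigr)\,dx=\sum_{j=1}^N\int_0^\infty\bigl|v_j'+b_jv_j\bigr|^2\,dx+\Bigl(\sum_{j=1}^N b_j(0)\Bigr)|v_0|^2 .
\end{equation*}
Because $\sum_{j=1}^N b_j(0)=(N-2k)a_k=-\alpha$ by the choice $a_k=\tfrac{\alpha}{2k-N}$, this boundary term cancels the $\delta$-contribution $+\alpha|v_0|^2$, so the full form equals $\sum_j\int_0^\infty|v_j'+b_jv_j|^2\,dx\ge0$; thus $\bb T^\alpha_{2,k}\ge0$. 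Moreover the form vanishes exactly when $v_j'+b_jv_j=0$ on every edge, i.e. $v_j=c_je^{-(x\mp a_k)^2/2}$ (automatically in $W^2(\mathbb R_+)$); continuity at the vertex forces all $c_j$ to coincide, whence $\ker(\bb T^\alpha_{2,k})=\Span\{\bb\Phi_k\}$, the derivative condition being satisfied automatically as it must since $\bb\Phi_k\in\dom(\bb T^\alpha_{2,k})$.

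For $(ii)$ the potentials are $V^\pm_k(x)=(x\pm a_k)^2-3$, so the edge operator is no longer nonnegative and the factorization cannot force triviality; instead I would solve the kernel equation directly. On each edge the unique (up to scalar) $W^2(\mathbb R_+)$-solution of $-v''+V_{k,j}v=0$ is $\psi_j(x)=(x\mp a_k)e^{-(x\mp a_k)^2/2}$, so a kernel element has $v_j=c_j\psi_j$. Imposing $v_1(0)=\dots=v_N(0)$ forces the coefficients to a common value up to sign ($c_j=-c$ for $j\le k$, $c_j=c$ for $j>k$), and substituting into $\sum_jv_j'(0)=\alpha v_1(0)$, after using $a_k(2k-N)=\alpha$ and $\psi_j'(0)=(1-a_k^2)e^{-a_k^2/2}$, reduces to $c\,[(1-a_k^2)+a_k^2]=c=0$. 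Hence $\ker(\bb T^\alpha_{1,k})=\{\bb 0\}$. The hard part is the positivity in $(i)$: it rests entirely on the exact cancellation of the $\delta$-coupling by the boundary terms of the factorization, which occurs precisely because the translation $a_k=\alpha/(2k-N)$ is the one making $\bb\Phi_k$ a solution of the stationary equation; for any other shift a residual boundary term of indefinite sign would remain, so the argument genuinely uses the explicit structure \eqref{Phi_vect_log}.
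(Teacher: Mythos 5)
Your proposal is correct in all three parts. For items $(i)$ and $(ii)$ it is essentially the paper's own argument in different clothing: the paper proves nonnegativity of $\bb T^\alpha_{2,k}$ via the ground-state representation $-v_j''+W_{k,j}v_j=\frac{-1}{\varphi_{k,j}}\frac{d}{dx}\bigl[\varphi_{k,j}^2\frac{d}{dx}\bigl(\frac{v_j}{\varphi_{k,j}}\bigr)\bigr]$ and shows that the vertex boundary terms cancel thanks to \eqref{D_alpha}; since $b_j=-\varphi_{k,j}'/\varphi_{k,j}$, your Darboux factorization $\int|v_j'+b_jv_j|^2$ is literally the same quantity as the paper's $\int\varphi_{k,j}^2\bigl|\frac{d}{dx}(v_j/\varphi_{k,j})\bigr|^2$, and your identity $\sum_j b_j(0)=-\alpha$ is exactly the paper's cancellation of boundary terms (you run it on the form domain, the paper on the operator domain; both are fine, and if one does not want to justify the form-domain identification, your computation applies verbatim on $\dom(\bb T^\alpha_{2,k})$, which suffices). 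Item $(ii)$ is the identical direct ODE argument, with matching arithmetic. Where you genuinely diverge is $(iii)$: the paper invokes an adaptation of the Berezin--Shubin theorem (discreteness for Schr\"odinger operators with potential tending to $+\infty$, ``with slight modifications'' to cover the graph), whereas you compare $\bb T^\alpha_{i,k}$ with the decoupled Dirichlet extension --- which has compact resolvent by the classical half-line result --- and transfer empty essential spectrum via Krein's formula/Weyl's theorem, using that the resolvent difference of two self-adjoint extensions of an operator with deficiency indices $(N,N)$ has finite rank. This buys a proof assembled entirely from standard one-dimensional facts, at no extra cost; it is arguably cleaner than modifying a textbook proof. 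One phrasing caveat: $\bb T^\alpha_{1,k}$ and $\bb T^\alpha_{2,k}$ have different edge potentials, so they are \emph{not} extensions of one common minimal operator $L$; each must be compared with the Dirichlet decoupling of its own minimal operator, which is clearly what your argument actually does.
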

\begin{proof}
$(i)$\, It is clear that  $\mathbf{\Phi}_k\in\ker(\mathbf{T}^\alpha_{2,k})$. To show the equality  $\ker(\mathbf{T}^\alpha_{2,k})=\Span\{\mathbf{\Phi}_k\}$  let us note that  any $\mathbf{V}=(v_j)_{j=1}^N\in W^2(\mathcal{G})$ satisfies the following identity 
\begin{equation*}
-v_j''+((x\mp\tfrac{\alpha}{2k-N})^2-1)v_j=
\frac{-1}{\varphi_{k,j}}\frac{d}{dx}\left[\varphi_{k,j}^2\frac{d}{dx}\left(\frac{v_j}{\varphi_{k,j}}\right)\right],\quad x>0,
\end{equation*}
where $\varphi_{k,j}=\varphi_{k,j}^\alpha$ is defined by \eqref{Phi_vect_log}, and the sign $-$ ($+$) corresponds to $j=1,\dots, k$ ($j=k+1,\dots,N$).
Thus, for $\mathbf{V}\in \dom(\bb T^\alpha_{2,k})$ we obtain 
\begin{equation*}\label{nonneg_graph'}
\begin{split}
&(\mathbf{T}^\alpha_{2,k}\mathbf{V},\mathbf{V})=
\sum\limits_{j=1}^N\int\limits^{\infty}_{0}(\varphi_{k,j})^2\left[\frac{d}{dx}\left(\frac{v_j}{\varphi_{k,j}}\right)\right]^2dx+
\sum\limits_{j=1}^N\left[-v_j'{v}_j+v_j^2\frac{\varphi_{k,j}'}{\varphi_{k,j}}\right]^{\infty}_{0}\\&=\sum\limits_{j=1}^N\int\limits^{\infty}_{0}(\varphi_{k,j})^2\left[\frac{d}{dx}\left(\frac{v_j}{\varphi_{k,j}}\right)\right]^2dx+\sum\limits_{j=1}^N\left[v_j'(0){v_j(0)}-v_j^2(0)\frac{\varphi_{k,j}'(0)}{\varphi_{k,j}(0)}\right].
\end{split}
\end{equation*}
Using boundary conditions \eqref{D_alpha}, we get
\begin{equation*}
\begin{split}
&\sum\limits_{j=1}^N\left[v_j'(0){v_j(0)}-v_j^2(0)\frac{\varphi_{k,j}'(0)}{\varphi_{k,j}(0)}\right]=0,
\end{split}
\end{equation*}
which induces $(\mathbf{T}^\alpha_{2,k}\bb V, \bb V)> 0$ for $\bb V\in \dom(\mathbf{T}^\alpha_{2,k})\setminus\Span\{\mathbf{\Phi}_k\}$. Therefore, $\ker(\mathbf{T}^\alpha_{2,k})=\Span\{\mathbf{\Phi}_k\}$. 
   
$(ii)$\,Concerning the kernel of $\mathbf{T}^\alpha_{1,k}$,  the only $L^2(\mathbb{R}_+)$-solution of the equation 
$$
 -v_j''+((x\mp\tfrac{\alpha}{2k-N})^2-3)v_j=0
 $$ 
 is $v_j=\varphi_{k,j}'$ up to a factor. Thus, any element of $\ker(\mathbf{T}^\alpha_{1,k})$ has the form $\mathbf{V}=(v_j)_{j=1}^N=(c_j\varphi_{k,j}')_{j=1}^N,\, c_j\in\mathbb{R}$. Continuity condition $v_1(0)=\dots= v_N(0)$ induces that $c_1=\dots=c_N$, i.e.
 \begin{equation*}\label{kernel}
  v_j(x)=c\left\{\begin{array}{ll}
                     -\varphi_{k,j}', & \quad\hbox{$j=1,\dots ,k$;} \\
                     \varphi_{k,j}', & \quad\hbox{$j=k+1,\dots ,N$}
                    \end{array}
                  \right., \quad c\in\mathbb{R}.
                  \end{equation*} 
                  Condition $\sum\limits_{j=1}^Nv_j'(0)=\alpha v_j(0)$ is equivalent to the equality $c\left(\tfrac{\alpha^2}{(N-2k)^2}-1\right)=c \tfrac{\alpha^2}{(N-2k)^2}$. The last one induces that $c=0$, and, therefore,  $\mathbf{V}\equiv\bb 0$.

$(iii)$\, With slight modifications we can repeat the proof of \cite[Theorem 3.1, Chapter II]{BerShu91} to show that the spectrum of $\mathbf{T}^\alpha_{1,k}$ is discrete since  $\lim\limits_{x\to +\infty}(x\mp\tfrac{\alpha}{2k-N})^2-3=+\infty,$
i.e. $\sigma(\mathbf{T}^\alpha_{1,k})=\sigma_p(\mathbf{T}^\alpha_{1,k})=\{\mu_{1,j}\}_{j\in\mathbb{N}}$. In particular, we have the following distribution of the eigenvalues
$$
\mu_{1,1}<\mu_{1,2}<\cdot\cdot\cdot<\mu_{1,j}<\cdot\cdot\cdot,
$$
with $\mu_{1,j}\to +\infty$ as $j\to +\infty$.
\end{proof}

 Below using the perturbation theory we will study  
 $n(\mathbf{T}_{1,k}^\alpha)$ in the space $L^2_k(\mathcal{G})$ for any $k=1,\dots,\left[\tfrac{N-1}{2}\right]$. The following lemma states the analyticity of the family  of operators $\mathbf{T}^\alpha_{1,k}$. 

 \begin{lemma}\label{analici} As a function of $\alpha$, $(\mathbf{T}^\alpha_{1,k})$ is  real-analytic family of self-adjoint operators of type (B) in the sense of Kato.
\end{lemma}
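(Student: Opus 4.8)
The plan is to recall Kato's definition of an analytic family of type (B): it suffices to exhibit a family of closed, densely defined, symmetric sesquilinear forms $\mathfrak{t}_{1,k}^\alpha$, defined on a fixed form domain independent of $\alpha$, whose value $\mathfrak{t}_{1,k}^\alpha[\mathbf{V}]$ is real-analytic (indeed polynomial) in $\alpha$ for each fixed $\mathbf{V}$ in that common domain, and such that the forms are uniformly sectorial/bounded below in a neighborhood of the real axis. First I would write down the natural sesquilinear form associated to $\mathbf{T}_{1,k}^\alpha$: for $\mathbf{V}=(v_j)_{j=1}^N$ and $\mathbf{U}=(u_j)_{j=1}^N$,
\begin{equation*}
\mathfrak{t}_{1,k}^\alpha[\mathbf{V},\mathbf{U}]=\sum_{j=1}^N\int_0^\infty\Bigl(v_j'\overline{u_j'}+\bigl((x\mp\tfrac{\alpha}{2k-N})^2-3\bigr)v_j\overline{u_j}\Bigr)\,dx+\alpha\, v_1(0)\overline{u_1(0)},
\end{equation*}
with the sign $-$ for $j\le k$ and $+$ for $j>k$, and with form domain
\begin{equation*}
\mathcal{Q}=\Bigl\{\mathbf{V}\in W^1(\mathcal{G}): v_1(0)=\dots=v_N(0)\Bigr\}=W^1_{\EE}(\mathcal{G}).
\end{equation*}
The essential point making this work is that the boundary term $\alpha|v_1(0)|^2$ encodes the $\delta$-coupling $\sum_j v_j'(0)=\alpha v_1(0)$ at the level of forms, so that $\mathcal{Q}$ is genuinely independent of $\alpha$.

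Next I would check the three Kato conditions in order. Analyticity in $\alpha$ is immediate once the form is written out: expanding $(x\mp\tfrac{\alpha}{2k-N})^2$ shows that $\mathfrak{t}_{1,k}^\alpha[\mathbf{V}]$ is a quadratic polynomial in $\alpha$ with coefficients that are bounded sesquilinear forms on $\mathcal{Q}$ (the terms $\int x|v_j|^2\,dx$ and $\int|v_j|^2\,dx$ are finite because $\mathbf{V}\in W^1(\mathcal{G})$ means $x v_j\in L^2$, and the boundary term is controlled by the trace inequality), hence it is entire in $\alpha$ for each fixed $\mathbf{V}$. For sectoriality and lower semiboundedness I would argue that the harmonic-oscillator part $\int(v_j'^2+(x\mp\tfrac{\alpha}{2k-N})^2|v_j|^2)$ is nonnegative and dominates, while the constant shift $-3$ and the boundary term $\alpha|v_1(0)|^2$ are both relatively form-bounded with relative bound zero: the latter follows from the standard one-dimensional trace estimate $|v_1(0)|^2\le\varepsilon\|v_1'\|_2^2+C_\varepsilon\|v_1\|_2^2$, which lets the boundary perturbation be absorbed uniformly for $\alpha$ in any bounded set. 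Closedness of $\mathfrak{t}_{1,k}^\alpha$ then follows because it is a relatively-form-bounded perturbation (bound $<1$) of the closed nonnegative form of the direct sum of shifted harmonic oscillators on $\mathcal{Q}$.

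I expect the main obstacle to be verifying that the family is genuinely of type (B) rather than merely of type (A), i.e.\ confirming that the common form domain $\mathcal{Q}$ is correct and $\alpha$-independent and that the associated self-adjoint operators coincide with the $\mathbf{T}_{1,k}^\alpha$ defined in the excerpt. Concretely, I would verify via integration by parts that a form-domain vector $\mathbf{V}$ lies in the operator domain precisely when $\mathbf{V}\in W^2(\mathcal{G})$ and the variational (natural) boundary condition produced by the boundary term $\alpha|v_1(0)|^2$ is exactly $\sum_{j=1}^N v_j'(0)=\alpha v_1(0)$, thereby matching $\dom(\mathbf{T}_{1,k}^\alpha)$. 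Once this identification is established, the uniform lower bound furnished by the trace estimate guarantees the required local uniform sectoriality near the real axis, and Kato's criterion (\emph{Perturbation Theory for Linear Operators}, Ch.~VII, §4) yields that $(\mathbf{T}_{1,k}^\alpha)$ is a real-analytic family of type (B). The analogous statement for $\mathbf{T}_{2,k}^\alpha$ is identical, with the potential shift $-3$ replaced by $-1$.
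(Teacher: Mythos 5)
Your proposal is correct and follows essentially the same route as the paper: the paper's proof likewise writes down the associated bilinear form (kinetic part, shifted harmonic-oscillator potentials with signs $\mp$ for $j\le k$ and $j>k$, and the boundary term $\alpha f_1(0)g_1(0)$) on an $\alpha$-independent form domain and invokes Kato's Theorem VII-4.2 to conclude the family is real-analytic of type (B). The only difference is one of detail: the paper states this in a single sentence, whereas you carry out the verification (polynomial dependence on $\alpha$, the trace estimate giving uniform form-boundedness of the boundary term, closedness, and the identification of the natural boundary condition $\sum_{j=1}^N v_j'(0)=\alpha v_1(0)$), all of which is accurate.
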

\begin{proof} By  \cite[Theorem VII-4.2]{kato}, it suffices to note that the family  of bilinear forms $(B^{\alpha}_{1,k})$ defined by
\begin{equation*}
\begin{split}
&B^{\alpha}_{1,k}(\bb F,\bb G)=\sum\limits_{j=1}^N\int\limits_{\mathbb{R}_+}f'_{j}g'_{j}dx+\sum\limits_{j=1}^k\int\limits_{\mathbb{R}_+}f_{j} g_{j} \left(\left(x-\tfrac{\alpha}{2k-N}\right)^2-3\right)dx\\&+\sum\limits_{j=k+1}^N\int\limits_{\mathbb{R}_+}f_{j} g_{j} \left(\left(x+\tfrac{\alpha}{2k-N}\right)^2-3\right)dx+
\alpha f_{1}(0)g_{1}(0)
\end{split}
\end{equation*}
  is   real-analytic  of type (B). 
  \end{proof}
Observe that  $\mathbf{T}^\alpha_{1,k}$ tends (in the generalized sense) to the following self-adjoint matrix Schr\"odinger operator on $L^2(\mathcal{G})$ with the  Kirchhoff condition at $\nu=0$  as $\alpha\to 0$  
 \begin{equation}\label{L^0_1}
 \begin{split}
 &\bb T^0_1=\left(\Big(-\frac{d^2}{dx^2}+x^2-3\Big)\delta_{i,j}\right), \\
 &\dom(\bb T^0_1)=\left\{\mathbf{V}\in W^2(\mathcal{G}): v_1(0)=\dots=v_N(0),\,\,\sum\limits_{j=1}^N  v_j'(0)=0\right\}.
 \end{split}
 \end{equation}
 As we intend to study negative spectrum of $\mathbf{T}^\alpha_{1,k}$, we first need to describe spectral properties of  $\bb T^0_1$. 
  \begin{theorem}\label{spect_L^0_1}
 Let $\bb T^0_1$ be defined by \eqref{L^0_1} and $k=1,\dots,\left[\tfrac{N-1}{2}\right]$. Then 
\begin{itemize}
  \item[$(i)$] $\ker(\bb T^0_1)=\Span\{\hat{\bb\Phi}_{0,1},\dots,\hat{\bb\Phi}_{0,N-1}\}$, where
   \begin{equation*}\label{Psi_j}
\hat{\mathbf{\Phi}}_{0,j}=(0,\dots,0,\underset{\bf j}{\varphi'_{0}},\underset{\bf j+1}{-\varphi'_{0}},0,\dots,0), \,\,\varphi_0=e^{-\tfrac{x^2}{2}}. 
 \end{equation*}
  \item[$(ii)$] In the space  $L^2_k(\mathcal{G})$ we have $\ker(\bb L^0_1)=\Span\{\mathbf{\widetilde{\Phi}}_{0,k}\}$, i.e. $\ker(\bb T^0_1|_{L^2_k(\mathcal{G})})=\Span\{\mathbf{\widetilde{\Phi}}_{0,k}\}$, where
  \begin{equation}\label{Psi_0}
\mathbf{\widetilde{\Phi}}_{0,k}=\left(\underset{\bf 1}{\tfrac{N-k}{k}\varphi'_{0}},\dots, \underset{\bf k}{\tfrac{N-k}{k}\varphi'_{0}},\underset{\bf k+1}{-\varphi'_{0}},\dots,\underset{\bf N}{-\varphi'_{0}}\right). 
 \end{equation}
  \item[$(iii)$]  The operator $\bb T^0_1$ has one simple negative eigenvalue in $L^2(\mathcal{G})$, i.e. $n(\bb T^0_1)=1$. Moreover, $\bb T^0_1$ has one simple negative eigenvalue in $L^2_k(\mathcal{G})$ for any $k$, i.e.  $n(\bb T^0_1|_{L^2_k(\mathcal{G})})=1$.
   \item[$(iv)$] The positive part of the spectrum of $\bb T^0_1$  is bounded away from zero. 
  \end{itemize}
\end{theorem}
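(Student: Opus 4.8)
The plan is to exploit the fact that, up to the constant shift $-3$, the scalar differential expression $\ell:=-\frac{d^2}{dx^2}+x^2-3$ acting on each edge is the quantum harmonic oscillator. Its eigenfunctions on the line are the Hermite functions $h_n(x)=H_n(x)e^{-x^2/2}$ with $\ell h_n=(2n-2)h_n$; in particular $h_0=\varphi_0$ yields $\ell\varphi_0=-2\varphi_0$, while $h_1\propto\varphi_0'$ yields $\ell\varphi_0'=0$ (the second independent solution of $\ell v=0$ grows like $e^{x^2/2}$ and is not in $L^2(\mathbb{R}_+)$). Since $\varphi_0'(0)=0$ and $\varphi_0(0)=1$, these two solutions carry, respectively, Neumann and (as $h_1(0)=0$) Dirichlet data at the vertex, which is what makes the whole computation explicit.

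The core step is to diagonalise the vertex coupling. I would pick an orthonormal basis $e_1,\dots,e_N$ of $\mathbb{C}^N$ with $e_1=\tfrac1{\sqrt N}(1,\dots,1)$ and pass to coordinates $\bb W=O\bb V$, $w_1=\tfrac1{\sqrt N}\sum_j v_j$. As $\bb T^0_1$ acts as $\ell$ componentwise it commutes with $O$, so it is transformed into a direct sum of scalar operators; the continuity condition $v_1(0)=\dots=v_N(0)$ says $\bb V(0)\in\Span\{e_1\}$, i.e. $w_2(0)=\dots=w_N(0)=0$, while the Kirchhoff condition $\sum_j v_j'(0)=0$ says $\bb V'(0)\perp e_1$, i.e. $w_1'(0)=0$. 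This yields the unitary equivalence
$$\bb T^0_1\;\cong\;\ell_N\,\oplus\,\underbrace{\ell_D\oplus\cdots\oplus\ell_D}_{N-1},$$
where $\ell_N$ and $\ell_D$ are $\ell$ on $L^2(\mathbb{R}_+)$ with Neumann, resp. Dirichlet, condition at $0$. Even/odd reflection to the line gives $\sigma(\ell_N)=\{4m-2\}_{m\ge0}=\{-2,2,6,\dots\}$ (even Hermite functions) and $\sigma(\ell_D)=\{4m\}_{m\ge0}=\{0,4,8,\dots\}$ (odd Hermite functions), each simple.

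From this all four assertions follow. Only $\ell_N$ contributes a negative eigenvalue, the simple value $-2$, so $n(\bb T^0_1)=n(\ell_N)+(N-1)n(\ell_D)=1+0=1$, and its eigenfunction is the fully symmetric vector $(\varphi_0,\dots,\varphi_0)$. Because $\ell$ acts identically on every edge and the defining conditions of $L^2_k(\mathcal G)$ are permutation-symmetric, $L^2_k(\mathcal G)$ reduces $\bb T^0_1$; since the unique negative mode is symmetric it lies in $L^2_k(\mathcal G)$, forcing $n(\bb T^0_1|_{L^2_k(\mathcal G)})=1$ as well, which is (iii). For (iv), $\sigma(\bb T^0_1)=\{-2,0,2,4,\dots\}$, so the positive part is bounded below by $2$. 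For the kernels, $\ker(\ell_N)=\{\bb 0\}$ and $\ker(\ell_D)=\Span\{\varphi_0'\}$ give $\dim\ker(\bb T^0_1)=N-1$; explicitly any kernel element has $v_j=c_j\varphi_0'$ with $\sum_j c_j=0$ (continuity is automatic since $\varphi_0'(0)=0$, and Kirchhoff gives $\sum_j v_j'(0)=-\sum_j c_j=0$), and the vectors $\hat{\bb\Phi}_{0,j}$ form a basis, proving (i). Intersecting with $L^2_k(\mathcal G)$ imposes $c_1=\dots=c_k$ and $c_{k+1}=\dots=c_N$ under $\sum_j c_j=0$, a one-dimensional space spanned by $\widetilde{\bb\Phi}_{0,k}$, proving (ii).

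The main obstacle is not the counting, which is explicit once the reduction is in place, but carrying the decomposition through correctly at the level of self-adjoint operators: one must check that the orthogonal change of variables respects both the vertex conditions and the weighted integrability $x^2 v_j\in L^2(\mathbb{R}_+)$ forced by the domain $W^2(\mathcal G)$ (rather than $H^2$), so that the summands $\ell_N,\ell_D$ are genuinely the self-adjoint harmonic-oscillator realisations. The only other point requiring care is the observation that the single negative eigenfunction is symmetric, which is precisely what guarantees it survives the restriction to $L^2_k(\mathcal G)$.
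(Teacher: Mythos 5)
Your proof is correct, but it takes a genuinely different route from the paper's. For items $(i)$ and $(ii)$ the two arguments essentially coincide (solve $\ell v=0$ in $L^2(\mathbb{R}_+)$, get $v_j=c_j\varphi_0'$, impose Kirchhoff). The real divergence is in $(iii)$--$(iv)$: the paper never diagonalises the vertex coupling. Instead it restricts $\bb T^0_1$ to the symmetric operator $\bb T_0$ obtained by adding the Dirichlet condition $v_j(0)=0$, proves $\bb T_0\geq 0$ via the ground-state (Darboux-type) identity built from $\varphi_0'$, computes the deficiency indices $n_\pm(\bb T_0)=1$ through the Albeverio--Kurasov scale of Hilbert spaces attached to $\bb T=\opt_0+3\bb I$ and the functional $\delta_1$, and then invokes the abstract bound (Proposition \ref{semibounded}) that a self-adjoint extension of a non-negative symmetric operator with deficiency index $1$ has at most one negative eigenvalue; equality follows from the explicit eigenpair $\bb T^0_1\bb\Phi_0=-2\bb\Phi_0$, and $(iv)$ follows from discreteness of the spectrum (the potential tends to $+\infty$), not from an explicit gap. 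Your symmetry reduction $\bb T^0_1\cong \ell_N\oplus\ell_D^{\oplus(N-1)}$ is cleaner and stronger here: it yields the entire spectrum $\{-2,0,2,4,\dots\}$, makes $(iv)$ trivial with the explicit gap $2$, and avoids extension theory altogether; the two points you flag (the separation property guaranteeing that the Neumann/Dirichlet summands with domains in $W^2(\mathbb{R}_+)$ are the genuine self-adjoint oscillator realisations, and the fact that $L^2_k(\mathcal{G})$ reduces $\bb T^0_1$) are indeed the only things to verify, and both hold. What the paper's heavier machinery buys is robustness: for $\alpha\neq 0$ the potentials $(x\pm a_k)^2-3$ differ across edges, your decoupling trick is unavailable, and the paper reuses exactly the same extension-theoretic scheme (non-negativity of a restriction plus deficiency-index counting, Proposition \ref{grafoN'} in the Appendix) in that setting, so its proof of Theorem \ref{spect_L^0_1} is written to match the argument it needs later, whereas yours exploits the special symmetry of the $\alpha=0$ case.
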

\begin{proof}
The proof  can be found in  \cite{AngGol16}. We repeat it  for  the reader's convenience. 

$(i)$ The only $L^2(\mathbb{R}_+)$-solution to the equation 
$$
 -v''_j+(x^2-3)v_j=0
 $$ 
 is $v_j=\varphi_0'$ (up to a factor). Thus, any element of $\ker(\mathbf{T}^0_1)$ has the form $\mathbf{V}=(v_j)_{j=1}^N=(c_j\varphi'_0)_{j=1}^N,\, c_j\in\mathbb{R}$. It is easily seen that the  continuity condition is satisfied since $\varphi'_0(0)=0$. Condition $\sum\limits_{j=1}^Nv_j'(0)=0$  gives rise to $(N-1)$-dimensional kernel of $\mathbf{T}^0_1$. Finally, note  that functions $\hat{\mathbf{\Phi}}_{0,j},\, j=1,\dots, N-1,$ form basis there.  
 
  $(ii)$ Arguing as in the previous item, we can show that $\ker(\bb T^0_1)$ is one-dimensional in $L^2_k(\mathcal G)$, and it is spanned by $\mathbf{\widetilde{\Phi}}_{0,k}$.

  $(iii)$ Denote $\opt_0=\left(\Big(-\frac{d^2}{dx^2}+x^2-3\Big)\delta_{k,j}\right)$. First, we will show that the operator $\bb T_0$ defined by 
\begin{equation*}
\bb T_0=\opt_0,\,\,\dom(\bb T_0)=\left\{\mathbf{V}\in W^2(\mathcal{G}):  v_1(0)=\dots=v_N(0)=0, \sum\limits_{j=1}^N  v_j'(0)=0 \right\}
\end{equation*}
is non-negative. The proof follows from the  identity   
\begin{equation*}
-v_j''+(x^2-3)v_j=
\frac{-1}{\varphi'_{0}}\frac{d}{dx}\left[(\varphi'_{0})^2\frac{d}{dx}\left(\frac{v_j}{\varphi'_{0}}\right)\right],\quad x> 0,
\end{equation*}
for any  $\mathbf{V}=(v_j)_{j=1}^N\in W^2(\mathcal{G})$.
Using the above equality and integrating by parts, we get for $\mathbf{V}\in \dom(\mathbf{T}_0)$
 \begin{equation*}\label{nonneg_graph}\begin{split}
&(\mathbf{T}_0\mathbf{V},\mathbf{V})=
\sum\limits_{j=1}^N\int\limits^{\infty}_{0}(\varphi'_{0})^2\left[\frac{d}{dx}\left(\frac{v_j}{\varphi'_{0}}\right)\right]^2dx+
\sum\limits_{j=1}^N\left[-v_j'v_j+v_j^2\frac{\varphi''_{0}}{\varphi'_{0}}\right]^{\infty}_{0}\\&=\sum\limits_{j=1}^N\int\limits^{\infty}_{0}(\varphi'_{0})^2\left[\frac{d}{dx}\left(\frac{v_j}{\varphi'_{0}}\right)\right]^2dx\geq 0.
\end{split}
\end{equation*}
Note that the equality 
\begin{equation*}\label{nonintegral}
\sum\limits_{j=1}^N\left[-v_j'v_j+v_j^2\frac{\varphi''_{0}}{\varphi'_{0}}\right]^{\infty}_{0}=0
\end{equation*} follows from the condition $v_j(0)=0$ and the fact that $x=0$ is the first-order zero for $\varphi'_0(x)$ (i.e. $\varphi''_0(0)\neq 0$).

Next we need to prove that $n_\pm(\bb T_0)=1$. 
 First, we  establish the  scale of Hilbert spaces associated with the self-adjoint non-negative operator  (see \cite[Section I,\S 1.2.2]{ak})
$$\bb T=\opt_0+3 \bb I,\quad \dom(\bb T)=\left\{\mathbf{V}\in W^2(\mathcal{G}):  v_1(0)=\dots=v_N(0), \sum\limits_{j=1}^N  v_j'(0)=0 \right\}.$$
Define for $s\geq 0$ the space 
$$
\mathfrak H_s(\bb T)=\left\{\bb V\in L^2(\mathcal G): \|\bb V\|_{s,2}=\Big\|(\bb T+ \bb I)^{s/2}\bb V\Big\|_2<\infty\right\}.
$$
The space $\mathfrak H_s(\bb T)$ with norm $\|\cdot\|_{s,2}$ is complete. The dual space  of $\mathfrak H_s(\bb T)$ is denoted by $\mathfrak H_{-s}(\bb T)=\mathfrak H_s(\bb T)'$. The norm in the space $\mathfrak H_{-s}(\bb T)$ is defined by the formula
$$
\|\bb V\|_{-s,2}=\Big \|(\bb T+ \bb I)^{-s/2} \bb V\Big\|_2.
$$
The spaces $\mathfrak H_s(\bb T)$ form the following chain 
\begin{equation*}\label{triples}
\dots\subset \mathfrak H_2(\bb T)\subset \mathfrak H_1(\bb T)\subset L^2(\mathcal G)=\mathfrak H_0(\bb T)\subset\mathfrak H_{-1}(\bb T)\subset \mathfrak H_{-2}(\bb T)\subset \dots
\end{equation*}
 The norm of the space 
$\mathfrak H_1(\bb T)$ can be calculated as follows
\begin{equation*}
\begin{split}
&\|\bb V\|^2_{1,2}=((\bb T+ \bb I)^{1/2}\bb V, (\bb T+\bb I)^{1/2}\bb V)\\&=\sum\limits_{j=1}^N\int\limits_0^\infty\left( |v'_j(x)|^2 +|v_j(x)|^2 +x^2|v_j(x)|^2\right)dx.
\end{split}
\end{equation*}
Therefore, we have the embedding $\mathfrak H_1(\bb T) \hookrightarrow  H^1(\mathcal G)$ and, by the Sobolev embedding, $\mathfrak H_1(\bb T) \hookrightarrow L^\infty (\mathcal G)$. From the former remark we obtain that the functional $\delta_1: \mathfrak H_1(\bb T)\to \mathbb C$ acting as  $\delta_1(\bb V)=v_1(0)$ belongs to $\mathfrak H_1(\bb T)'=\mathfrak H_{-1}(\bb T)$ and consequently  $\delta_1\in \mathfrak H_{-2}(\bb T)$. Therefore, using   \cite[Lemma 1.2.3]{ak}, it follows  that the restriction $\hat{\bb T}_0$ of the operator $\bb T$  onto the domain 
$$
\dom(\hat{\bb T}_0)=\{\bb V\in \dom(\bb T): \delta_1(\bb V)=v_1(0)=0\}=\dom(\bb T_0)$$
 
is a densely defined symmetric operator with  equal deficiency indices   $n_{\pm}(\hat{\bb T}_0)=1$. By    \cite[Chapter IV, Theorem 6]{Nai67}, the operators $\hat{\bb T}_0$ and $\bb T_0$ have equal deficiency indices. Since $\bb T_1^0$ is the self-adjoint extension of $\bb T_0$, by  Proposition \ref{semibounded},  we get $n(\bb T_1^0)\leq 1$. Using $\bb T_1^0\bb \Phi_0=-2\bb \Phi_0,$ where $\bb \Phi_0=(\varphi_0,\dots,\varphi_0)$, we arrive ate $n(\bb T_1^0)=1$. Since $\bb \Phi_0\in L^2_k(\mathcal{G})$ for any $k$, one concludes $n(\bb T_1^0|_{L^2_k(\mathcal{G})})=1$.

  $(iv)$\, See the proof of Proposition \ref{grafoN2}$(iii)$. 
\end{proof}
\begin{remark} Observe that, when we deal with the deficiency indices, the operator
$\bb T_0$
is assumed to act on complex-valued functions which however does not affect
the analysis of the negative spectrum of $\bb T_1^0$
acting on real-valued functions.
\end{remark}
Combining Lemma \ref{analici} and Theorem \ref{spect_L^0_1}, in the framework of the perturbation theory we obtain the following proposition. 
\begin{proposition}\label{perteigen} Let  $k=1,\dots,\left[\tfrac{N-1}{2}\right]$. Then there  exist $\alpha_0>0$ and two analytic functions $\mu_k : (-\alpha_0,\alpha_0)\to \mathbb R$ and $\bb E_k: (-\alpha_0,\alpha_0)\to L^2_k(\mathcal{G})$ such that
\begin{enumerate}
\item[$(i)$] $\mu_k(0)=0$ and $\bb E_k(0)=\mathbf{\widetilde{\Phi}}_{0,k}$, where $\mathbf{\widetilde{\Phi}}_{0,k}$ is defined by \eqref{Psi_0}.

\item[$(ii)$] For all $\alpha\in (-\alpha_0,\alpha_0)$, $\lambda_k(\alpha)$ is the  simple isolated second eigenvalue of $\bb T^\alpha_{1,k}$ in $L^2_k(\mathcal{G})$, and $\bb E_k(\alpha)$ is the associated eigenvector for $\lambda_k(\alpha)$.

\item[$(iii)$] $\alpha_0$ can be chosen small enough to ensure that for  $\alpha\in (-\alpha_0,\alpha_0)$  the spectrum of $\bb T_{1,k}^\alpha$ in $L^2_k(\mathcal{G})$ is positive, except at most the  first two eigenvalues.
\end{enumerate}
\end{proposition}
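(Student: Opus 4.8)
The plan is to apply Kato's analytic perturbation theory to the restricted family $\bigl(\bb T^\alpha_{1,k}|_{L^2_k(\mathcal{G})}\bigr)$, exploiting that $0$ is a simple isolated eigenvalue of the unperturbed operator $\bb T^0_1|_{L^2_k(\mathcal{G})}$. First I would record that the potentials $V_k^\mp$ are identical across the first $k$ edges and across the last $N-k$ edges, so $\bb T^\alpha_{1,k}$ commutes with permutations within each of these two groups and therefore leaves the symmetric subspace $L^2_k(\mathcal{G})$ invariant. Restricting the form $B^\alpha_{1,k}$ of Lemma \ref{analici} to $L^2_k(\mathcal{G})$ preserves its real-analyticity of type (B), so $\bigl(\bb T^\alpha_{1,k}|_{L^2_k(\mathcal{G})}\bigr)$ is again a real-analytic family of self-adjoint operators of type (B) in the sense of Kato.

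Next I invoke Theorem \ref{spect_L^0_1}: at $\alpha=0$ the operator $\bb T^0_1|_{L^2_k(\mathcal{G})}$ has exactly one simple negative eigenvalue (item (iii)), a one-dimensional kernel spanned by $\mathbf{\widetilde{\Phi}}_{0,k}$ (item (ii)), and the remaining spectrum bounded below by some $c>0$ (item (iv), inherited by the restriction since $L^2_k(\mathcal{G})$ is invariant). Hence $0$ is the second eigenvalue, it is simple, and it is isolated from the rest of the spectrum with a definite gap on both sides. The perturbation theory for an isolated eigenvalue of finite multiplicity in a type (B) family (\cite[Chapter VII]{kato}) then provides $\alpha_0>0$ together with analytic functions $\mu_k:(-\alpha_0,\alpha_0)\to\mathbb{R}$ and $\bb E_k:(-\alpha_0,\alpha_0)\to L^2_k(\mathcal{G})$; because the eigenvalue is simple the associated Riesz eigenprojection stays rank one and the branch does not split, so $\mu_k(\alpha)$ is a simple isolated eigenvalue of $\bb T^\alpha_{1,k}|_{L^2_k(\mathcal{G})}$ with eigenvector $\bb E_k(\alpha)$, and $\mu_k(0)=0$, $\bb E_k(0)=\mathbf{\widetilde{\Phi}}_{0,k}$. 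This yields item (i), and (after identifying $\mu_k(\alpha)$ with $\lambda_k(\alpha)$) the analytic objects of items (ii)--(iii).

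To convert this into the ordering and spectral statements of (ii) and (iii), I would invoke the stability of the discrete spectrum under the analytic family. Since at $\alpha=0$ the lowest eigenvalue is strictly negative while the third and all higher eigenvalues lie above $c$, continuity of the isolated eigenvalues in $\alpha$ lets me shrink $\alpha_0$ so that for $|\alpha|<\alpha_0$ the lowest eigenvalue stays negative, $\mu_k(\alpha)$ stays in a small neighborhood of $0$, and every eigenvalue above these remains above $c/2>0$. The resulting strict ordering of the lowest three eigenvalues shows that $\mu_k(\alpha)$ is genuinely the second eigenvalue (item (ii)) and that the spectrum is positive apart from the first two eigenvalues (item (iii)).

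I expect the main obstacle to be precisely this last uniform control of the rest of the spectrum: the perturbation theory yields analyticity only of the single branch emanating from $0$, so I must separately exclude the possibility that some other eigenvalue descends to $0$ for small $\alpha$. I would close this gap by exploiting the spectral gap furnished by Theorem \ref{spect_L^0_1}(iv) together with the upper semicontinuity of the spectrum for analytic families of type (B), which prevents the third-and-higher eigenvalues from crossing zero once $\alpha_0$ is chosen small enough.
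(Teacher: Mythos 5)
Your proposal is correct and follows essentially the same route as the paper: Kato's analytic perturbation theory for the type (B) family of Lemma \ref{analici}, applied at $\alpha=0$ where Theorem \ref{spect_L^0_1} supplies the simple negative eigenvalue, the simple zero eigenvalue spanned by $\mathbf{\widetilde{\Phi}}_{0,k}$, and the positive spectral gap. The only cosmetic difference is that the paper encloses both lowest eigenvalues $\{\mu_1^0,0\}$ in a single Riesz contour $\Gamma$ and invokes the Kato--Rellich theorem, which settles your final concern (that no higher eigenvalue descends to zero) in one stroke, whereas you track the branch emanating from zero and control the rest of the spectrum separately.
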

\begin{proof} Using the  structure of the spectrum of the operator $\bb T^0_1$  given in   Theorem \ref{spect_L^0_1}$(ii)-(iv)$, we can separate the spectrum $\sigma(\bb T^0_1)$ in $L^2_k(\mathcal{G})$  into two parts $\sigma_0=\{\mu^0_1, 0\}$, $\mu^0_1<0$, and  $\sigma_1$ by a closed curve  $\Gamma$ (for example, a circle), such that $\sigma_0$ belongs to the inner domain of $\Gamma$ and $\sigma_1$ to the outer domain of $\Gamma$ (note that $\sigma_1\subset (\epsilon, +\infty)$ for $\epsilon>0$).  Next,  Lemma \ref{analici} and the analytic perturbations theory imply  that  $\Gamma\subset \rho(\bb T^\alpha_{1, k})$ for sufficiently small $|\alpha |$, and $\sigma (\bb T^\alpha_{1, k})$ is likewise separated by $\Gamma$ into two parts, such   that the part of $\sigma (\bb T^\alpha_{1, k})$ inside $\Gamma$ consists of a finite number of eigenvalues with total multiplicity (algebraic) two. Therefore, we obtain from the Kato-Rellich Theorem (see  \cite[Theorem XII.8]{RS}) the existence of two analytic functions $\mu_k, \bb E_k$ defined in a neighborhood of zero such that  the items $(i)$, $(ii)$ and $(iii)$ hold.
\end{proof}

Now we investigate how the perturbed second eigenvalue moves depending on the sign of $\alpha$.

\begin{proposition}\label{signeigen} There exists $0<\alpha_1<\alpha_0$ such that $\lambda_k(\alpha)<0$ for any $\alpha\in (-\alpha_1,0)$, and $\lambda_k(\alpha)>0$ for  any $\alpha\in (0, \alpha_1)$. Thus,   for $\alpha$  close to $0$,  we have $n(\bb T^\alpha_{1, k}|_{L^2_k(\mathcal G)})=2$  as $\alpha<0$, and $n(\bb T^\alpha_{1, k}|_{L^2_k(\mathcal G)})=1$  as $\alpha>0$. 
\end{proposition}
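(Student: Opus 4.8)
The plan is to exploit the analyticity furnished by Proposition \ref{perteigen}: since $\lambda_k(\alpha)$ is a real-analytic function on $(-\alpha_0,\alpha_0)$ with $\lambda_k(0)=0$, its sign on each side of the origin is governed by the first nonvanishing derivative at $0$. I would therefore reduce the whole statement to the single inequality $\lambda_k'(0)>0$: once this is known, analyticity yields some $0<\alpha_1<\alpha_0$ with $\lambda_k(\alpha)<0$ on $(-\alpha_1,0)$ and $\lambda_k(\alpha)>0$ on $(0,\alpha_1)$.

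To compute $\lambda_k'(0)$ I would apply the first-order (Feynman--Hellmann) perturbation formula for an analytic family of type (B). The form domain of $B^\alpha_{1,k}$ (functions in $\mathcal{E}\cap L^2_k(\mathcal{G})$ carrying the weight $x$, subject only to the $\alpha$-independent continuity condition at the vertex) does not depend on $\alpha$, and by Proposition \ref{perteigen} $\lambda_k(\alpha)$ is a simple isolated eigenvalue with analytic eigenvector $\bb E_k(\alpha)$; hence
$$\lambda_k'(0)=\frac{\dot B^0_{1,k}(\mathbf{\widetilde{\Phi}}_{0,k},\mathbf{\widetilde{\Phi}}_{0,k})}{\|\mathbf{\widetilde{\Phi}}_{0,k}\|_2^2},$$
where $\dot B^0_{1,k}$ is the $\alpha$-derivative of the form in Lemma \ref{analici} evaluated at $\alpha=0$ with its arguments held fixed. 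The $\alpha$-dependence lives only in the two shifted harmonic potentials and in the boundary term, which after differentiation gives
$$\dot B^0_{1,k}(\bb F,\bb G)=\frac{2}{N-2k}\Bigl(\sum_{j=1}^k\int_0^\infty x f_j g_j\,dx-\sum_{j=k+1}^N\int_0^\infty x f_j g_j\,dx\Bigr)+f_1(0)g_1(0).$$

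Next I would substitute $\bb F=\bb G=\mathbf{\widetilde{\Phi}}_{0,k}$ from \eqref{Psi_0}. The boundary term drops out because $\varphi_0'(0)=0$, so only the weighted integrals of $(\varphi_0')^2=x^2e^{-x^2}$ survive; writing $I:=\int_0^\infty x(\varphi_0')^2\,dx=\int_0^\infty x^3e^{-x^2}\,dx=\tfrac12$, the first group contributes $k$ copies of $\bigl(\tfrac{N-k}{k}\bigr)^2 I$ and the second $N-k$ copies of $I$. A short simplification, in which the factor $N-2k$ cancels, then yields
$$\frac{2I}{N-2k}\Bigl(\frac{(N-k)^2}{k}-(N-k)\Bigr)=\frac{2I(N-k)}{k}=\frac{N-k}{k}>0,$$
using $0<k<N$. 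Thus $\dot B^0_{1,k}(\mathbf{\widetilde{\Phi}}_{0,k},\mathbf{\widetilde{\Phi}}_{0,k})>0$ and therefore $\lambda_k'(0)>0$, as required.

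Finally, the Morse-index count follows directly: for $|\alpha|<\alpha_1$ the lowest eigenvalue of $\bb T^\alpha_{1,k}|_{L^2_k(\mathcal{G})}$ remains negative, being the analytic continuation of the isolated simple eigenvalue $\mu_1^0<0$ of $\bb T^0_1$, while by Proposition \ref{perteigen}$(iii)$ the remainder of the spectrum stays positive and only $\lambda_k(\alpha)$ changes sign. Hence $n(\bb T^\alpha_{1,k}|_{L^2_k(\mathcal{G})})=2$ for $-\alpha_1<\alpha<0$ and $=1$ for $0<\alpha<\alpha_1$. The main obstacle is the explicit evaluation of $\dot B^0_{1,k}$ on $\mathbf{\widetilde{\Phi}}_{0,k}$ — in particular checking that the boundary term vanishes and that the two weighted integrals recombine, through the coefficient $N-2k$, into a manifestly positive constant; everything else is bookkeeping on the analytic perturbation already assembled in Proposition \ref{perteigen}.
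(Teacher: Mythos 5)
Your proposal is correct and in substance coincides with the paper's proof: both reduce the statement to the sign of the first-order coefficient of $\lambda_k$ at $\alpha=0$, evaluate it against the unperturbed eigenfunction $\widetilde{\bb\Phi}_{0,k}$ (with the vertex term dropping out because $\varphi_0'(0)=0$), and arrive at the same positive quantity $\tfrac{2(N-k)}{k}\int_0^\infty x(\varphi_0')^2\,dx$ divided by $\|\widetilde{\bb\Phi}_{0,k}\|_2^2$, after the same cancellation of the factor $N-2k$. The only difference is cosmetic: the paper extracts the coefficient by expanding $(\bb T^\alpha_{1,k}\bb E_k(\alpha),\widetilde{\bb\Phi}_{0,k})$, using self-adjointness and the explicit action of $\bb T^\alpha_{1,k}$ on $\widetilde{\bb\Phi}_{0,k}$ (which lies in $\dom(\bb T^\alpha_{1,k})$ for every $\alpha$), whereas you differentiate the quadratic form of Lemma \ref{analici} (Feynman--Hellmann for the type (B) family); both implementations are valid.
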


\begin{proof} 
 Using analyticity of $(\bb T_{1,k}^\alpha)$, we obtain for sufficiently   small $\alpha$  the following expansions of the second eigenvalue $\mu_k$ and the corresponding eigenfunction of $\bb T_{1,k}^\alpha$ in $L^2_k(\mathcal{G})$ 
  \begin{equation}\label{decomp1_log}
\mu_k(\alpha)=\mu_{0,k} \alpha+ O(\alpha^2)\quad\text{and}\quad \bb E_k(\alpha)=\mathbf{\widetilde{\Phi}}_{0,k}+ \alpha \bb E_{0,k}  +   \bb O(\alpha^2).
\end{equation}
 From \eqref{decomp1_log} we get 
 \begin{equation}\label{1_log}
  (\bb T_{1,k}^\alpha\bb E_k(\alpha),\bb{\widetilde{\Phi}}_{0,k})=\mu_{0,k}\alpha||\bb{\widetilde{\Phi}}_{0,k}||_2^2+O(\alpha^2).
  \end{equation}
Using   
\begin{equation*}\label{T_Psi}
\begin{split}
(\bb T_{1,k}^\alpha\bb{\widetilde{\Phi}}_{0,k})_j= \left\{
                    \begin{array}{ll}
                   \tfrac{N-k}{k}\left(-\tfrac{2\alpha}{2k-N}x+\tfrac{\alpha^2}{(2k-N)^2}\right)\varphi'_0, & \,\hbox{$j=1,\dots ,k$;} \\
                   -\left(\tfrac{2\alpha}{2k-N}x+\tfrac{\alpha^2}{(2k-N)^2}\right)\varphi'_0, & \,\hbox{$j=k+1,\dots ,N,$}
                    \end{array}
                  \right.
        \end{split}
\end{equation*} 
we obtain 
\begin{equation}\label{2_log}
\begin{split}
&(\bb T_{1,k}^\alpha\bb E_k(\alpha),\bb{\widetilde{\Phi}}_{0,k})=(\bb E_k(\alpha),\bb T_{1,k}^\alpha\bb{\widetilde{\Phi}}_{0,k})=(\bb{\widetilde{\Phi}}_{0,k},\bb T_{1,k}^\alpha\bb{\widetilde{\Phi}}_{0,k})+O(\alpha^2)\\&=\tfrac{2\alpha(N-k)}{k}\int\limits_0^\infty x(\varphi'_0)^2dx+O(\alpha^2).
\end{split}
\end{equation}
Combining \eqref{1_log} and \eqref{2_log}, we get 
$$\mu_{0,k}=\frac{\tfrac{2(N-k)}{k}\int\limits_0^\infty x(\varphi'_0)^2dx}{||\widetilde{\bb \Phi}_{0,k}||^2_2}+O(\alpha).$$
It is easily seen that $\mu_{0,k}>0$ for small $\alpha$. 
Therefore, $n(\bb T_{1,k}^\alpha|_{L^2_k(\mathcal{G})})=2$ for $\alpha<0$, and  $n(\bb T_{1,k}^\alpha|_{L^2_k(\mathcal{G})})=1$ for $\alpha>0$.
\end{proof}

Now we can count the number of negative eigenvalues of $\bb T^\alpha_{1,k}$ in   $L^2_k(\mathcal{G})$ for any $\alpha$, using a classical continuation argument based on the Riesz-projection. 

\begin{proposition}\label{n(L_1)} 
Let $k=1,\dots,\left[\tfrac{N-1}{2}\right]$. Then the following assertions hold. 
\begin{enumerate}
\item[$(i)$] If  $\alpha>0$, then  $n(\bb T^\alpha_{1,k}|_{L^2_k(\mathcal{G})})=1$.
\item[$(ii)$] If $\alpha<0$, then  $n(\bb T^\alpha_{1,k}|_{L^2_k(\mathcal{G})})=2$.
\end{enumerate}
\end{proposition}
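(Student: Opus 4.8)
The plan is to run a global continuation argument in the parameter $\alpha$, anchored near $\alpha=0$ by Proposition \ref{signeigen} and controlled away from $\alpha=0$ by the spectral information in Proposition \ref{grafoN2}. Concretely, I would show that the map $\alpha\mapsto n(\bb T^\alpha_{1,k}|_{L^2_k(\mathcal{G})})$ is constant on each of the two connected intervals $(-\infty,0)$ and $(0,\infty)$. Since Proposition \ref{signeigen} already gives $n=2$ for small $\alpha<0$ and $n=1$ for small $\alpha>0$, this constancy yields both assertions simultaneously.

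The mechanism for local constancy is the Riesz spectral projection, carried out inside the reduced space $L^2_k(\mathcal{G})$ (which is invariant under $\bb T^\alpha_{1,k}$, the potentials and boundary condition being symmetric within the two groups of edges). Fix $\alpha^{*}\neq0$. By Proposition \ref{grafoN2}(iii) the spectrum is discrete, and by Proposition \ref{grafoN2}(ii) we have $0\notin\sigma(\bb T^{\alpha^{*}}_{1,k})$; hence the finitely many negative eigenvalues are bounded away from $0$, and one may choose a closed, positively oriented contour $\Gamma$ lying in the resolvent set that encircles exactly the negative part of the spectrum. I would then consider
\[
P(\alpha)=\frac{1}{2\pi i}\oint_\Gamma (z-\bb T^\alpha_{1,k})^{-1}\,dz,
\]
which, by Lemma \ref{analici} and the analytic perturbation theory, depends analytically on $\alpha$ in a neighbourhood of $\alpha^{*}$ as long as $\Gamma\subset\rho(\bb T^\alpha_{1,k})$. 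A family of projections that is continuous in $\alpha$ has constant rank, and the rank of $P(\alpha)$ equals $n(\bb T^\alpha_{1,k}|_{L^2_k(\mathcal{G})})$; thus the Morse index is locally constant near $\alpha^{*}$, and connectedness of the two intervals upgrades this to constancy on each.

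Two points must be verified to make the contour legitimate, and together they form the technical core. First, no eigenvalue may cross $0$ as $\alpha$ moves away from $0$: this is exactly Proposition \ref{grafoN2}(ii), since $\ker(\bb T^\alpha_{1,k})=\{\bb 0\}$ means $0$ is never an eigenvalue for $\alpha\neq0$, so $\Gamma$ can be kept disjoint from the spectrum on the side of $0$. Second, no eigenvalue may escape to $-\infty$, i.e. I need a lower bound for $\bb T^\alpha_{1,k}$ uniform for $\alpha$ in a compact subinterval around $\alpha^{*}$. This holds because the potentials $V^\pm_k(x)=\left(x\pm\tfrac{\alpha}{2k-N}\right)^2-3$ are bounded below by $-3$ independently of $\alpha$, while the only indefinite term in the form $B^\alpha_{1,k}$ is the boundary contribution $\alpha\,|v_1(0)|^2$. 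Estimating $|v_1(0)|^2\le \eta\|v_1'\|_2^2+C_\eta\|v_1\|_2^2$ by the trace/Sobolev inequality, choosing $\eta$ with $(\sup|\alpha|)\,\eta$ small enough to be absorbed into the kinetic energy $\|\bb V'\|_2^2$, leaves a lower bound depending only on $\sup|\alpha|$ over the subinterval; hence the negative eigenvalues remain in a fixed compact set and $\Gamma$ can be fixed on a whole neighbourhood of $\alpha^{*}$.

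I expect the main obstacle to be precisely this bookkeeping: producing the uniform lower bound and verifying that a single contour $\Gamma$ simultaneously separates the negative spectrum from $0$ and encloses all of it for every $\alpha$ near $\alpha^{*}$. Once that is in place, the rank-constancy of $P(\alpha)$ and the connectedness argument are routine, and since $0\notin\sigma(\bb T^\alpha_{1,k})$ throughout, the index cannot change across any $\alpha^{*}\neq0$. The only admissible values are therefore those forced by Proposition \ref{signeigen}, giving $n(\bb T^\alpha_{1,k}|_{L^2_k(\mathcal{G})})=1$ for all $\alpha>0$ and $n(\bb T^\alpha_{1,k}|_{L^2_k(\mathcal{G})})=2$ for all $\alpha<0$.
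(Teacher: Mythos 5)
Your proposal is correct and takes essentially the same route as the paper: a continuation argument in $\alpha$ via analytic Riesz projections (Lemma \ref{analici}), anchored near $\alpha=0$ by Proposition \ref{signeigen}, with the trivial kernel from Proposition \ref{grafoN2}$(ii)$ ensuring the contour can be kept in the resolvent set at $0$ for every $\alpha\neq 0$. The paper merely packages the globalization differently (a contradiction with the infimum $\alpha_\infty$ of the set where the index equals two, rather than local constancy plus connectedness) and leaves the uniform lower semi-boundedness implicit, which you spell out via the trace inequality.
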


\begin{proof} We consider  the case $\alpha<0$. Recall that  $\ker(\bb T^\alpha_{1,k})=\{\bb 0\}$  by Proposition \ref{grafoN2}. Define $\alpha_\infty$ by
$$
\alpha_\infty=\inf \{\tilde \alpha<0:  \bb T^\alpha_{1,k}\;{\text{has exactly two negative eigenvalues in}\,\, L^2_k(\mathcal{G})\,\, \text{for all}}\; \alpha \in (\tilde \alpha,0)\}.
$$
 Proposition \ref{signeigen} implies that $\alpha_\infty$ is well defined and $\alpha_\infty\in [-\infty,0)$. We claim that $\alpha_\infty=-\infty$. Suppose that $\alpha_\infty> -\infty$. Let $M=n(\bb T^{\alpha_{\infty}}_{1,k}|_{L^2_k(\mathcal{G})})$ and $\Gamma$  be a closed curve (for example, a circle or a rectangle) such that $0\in \Gamma\subset \rho(\bb T^{\alpha_{\infty}}_{1,k})$, and  all the negative eigenvalues of  $\bb T^{\alpha_{\infty}}_{1,k}$ belong to the inner domain of $\Gamma$.  The existence of such $\Gamma$ can be  deduced from the lower semi-boundedness of the quadratic form associated to $\bb T^{\alpha_{\infty}}_{1,k}$.
 
Next, from Lemma \ref{analici} it  follows  that there is  $\epsilon>0$ such that for $\alpha\in [\alpha_{\infty}-\epsilon, \alpha_{\infty}+\epsilon]$ we have $\Gamma\subset \rho(\bb T^\alpha_{1,k})$ and for $\xi \in \Gamma$,
$\alpha\to (\bb T^\alpha_{1,k}-\xi)^{-1}$ is analytic. Therefore, the existence of an analytic family of Riesz-projections $\alpha\to P(\alpha)$  given by 
$$
P(\alpha)=-\frac{1}{2\pi i}\int\limits_{\Gamma} (\bb T^{\alpha}_{1,k}-\xi)^{-1}d\xi
$$
implies  that $\dim(\ran P(\alpha))=\dim(\ran P(\alpha_\infty))=M$ for all $\alpha\in [\alpha_\infty-\epsilon, \alpha_{\infty}+\epsilon]$. Next, by definition of $\alpha_\infty$, $\bb T^{\alpha_{\infty}+\epsilon}_{1, k} $ has two negative eigenvalues and $M=2$, hence, $\bb T^\alpha_{1,k}$ has two negative eigenvalues for $\alpha\in (\alpha_{\infty}-\epsilon, 0)$,  which contradicts with the definition of $\alpha_{\infty}$. Therefore,  $\alpha_{\infty}=-\infty$. 
\end{proof}
 \begin{remark}\label{Morse_est}
In  Proposition \ref{grafoN'}   we show the following estimates of $n(\bb T_{1,k}^\alpha)$ in the whole space $L^2(\mathcal{G})$:
 
 $\bullet$ \,\,$n(\bb T_{1,k}^\alpha)\leq k+1$ for $\alpha<0$;
 
 $\bullet$ \,\,$n(\bb T_{1,k}^\alpha)\leq N-k$ for $\alpha>0$.
 
 We believe that these estimates might be useful for the investigation of the orbital instability of  the standing  waves $e^{i\omega t}\mathbf{\Phi}_{k}^\alpha$ in $W_{\EE}^1(\mathcal{G})$.
  \end{remark}
\noindent{\bf Proof of Theorem \ref{main_log}.}

Due to Theorem \ref{7main}, we have  $n(\bb H_{k}^\alpha|_{L^2_k(\mathcal{G})})=2$ for $\alpha<0$,  and  $n(\bb H_{k}^\alpha|_{L^2_k(\mathcal{G})})=1$ for $\alpha>0$.
Using  Theorem \ref{stabil_graph_log}, we obtain orbital stability of $e^{i\omega t}\bb \Phi_k$ in $W^1_{\EE,k}(\mathcal{G})$ for $\alpha>0$ and  spectral instability for $\alpha<0$. 
 \section{Appendix}\label{app}

Below  we  show the estimates  from Remark \ref{Morse_est}. We use the following  abstract result (see \cite{Nai67}).
\begin{proposition}\label{semibounded}
Let $A$  be a densely defined lower semi-bounded symmetric operator (that is, $A\geq mI$)  with finite deficiency indices $n_{\pm}(A)=n<\infty$  in the Hilbert space $\mathcal{H}$, and let $\widetilde{A}$ be a self-adjoint extension of $A$.  Then the spectrum of $\widetilde{A}$  in $(-\infty, m)$ is discrete and  consists of at most $n$  eigenvalues counting multiplicities.
\end{proposition}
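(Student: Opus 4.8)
The plan is to prove the single sharper estimate that the spectral subspace $\mathcal R:=\ran E_{\widetilde A}\big((-\infty,m)\big)$ has dimension at most $n$, where $E_{\widetilde A}$ denotes the spectral measure of $\widetilde A$; this will give both assertions at once. Indeed, once $\dim\mathcal R\le n<\infty$ is known, $\widetilde A$ reduces on the orthogonal pair $\mathcal R,\mathcal R^\perp$: its part in $\mathcal R$ is a self-adjoint operator on a space of dimension at most $n$, hence a finite family of eigenvalues, all strictly below $m$ and of total multiplicity $\le n$, while its part in $\mathcal R^\perp$ has spectrum contained in $[m,\infty)$. Consequently $\sigma(\widetilde A)\cap(-\infty,m)$ is automatically a finite set of isolated eigenvalues of finite multiplicity — that is, discrete — with at most $n$ of them counted with multiplicity.

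The two ingredients will be von Neumann's extension theory and a min-max argument carried out on spectral subspaces of bounded support. First I would pass to the closure $\overline A$: the lower bound survives the closure, so $(\overline A v,v)\ge m\|v\|^2$ for all $v\in\dom(\overline A)$, the deficiency indices are unchanged, and $\widetilde A$ remains a self-adjoint extension of $\overline A$. I would then invoke von Neumann's formula, by which $\dom(\overline A)$ has codimension exactly $n_+(\overline A)=n$ inside $\dom(\widetilde A)$, the extension being obtained by adjoining an $n$-dimensional subspace to $\dom(\overline A)$.

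For the count I would fix a finite $a<m$ and set $L_a:=\ran E_{\widetilde A}\big([a,m)\big)$. Each $v\in L_a$ has bounded spectral support, hence lies in $\dom(\widetilde A)$, and for $v\ne0$ the spectral theorem gives $(\widetilde A v,v)=\int_{[a,m)}t\,d\|E_{\widetilde A}(t)v\|^2<m\|v\|^2$, the inequality being strict since $m-t>0$ on the support. If $\dim L_a\ge n+1$, then because $\dom(\overline A)$ has codimension $n$ in $\dom(\widetilde A)$ the quotient map kills a nonzero vector of $L_a$, producing $v\ne0$ with $v\in L_a\cap\dom(\overline A)$; for such $v$ one has $(\widetilde A v,v)=(\overline A v,v)\ge m\|v\|^2$, contradicting the strict inequality. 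Hence $\dim L_a\le n$ for every $a<m$, and letting $a\to-\infty$ the projections $E_{\widetilde A}([a,m))$ increase strongly to $E_{\widetilde A}((-\infty,m))$, so $\dim\mathcal R=\sup_{a}\dim L_a\le n$.

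The delicate point, and where I expect the main obstacle, is that a self-adjoint extension of a semibounded symmetric operator need not be semibounded and could a priori carry essential spectrum below $m$; a naive count of eigenvectors would control only the point spectrum and would not exclude this. The device that removes the difficulty is precisely to argue on the spectral subspaces $L_a$ of bounded support rather than on eigenvectors: this guarantees $L_a\subset\dom(\widetilde A)$, validates the form inequality on all of $L_a$, and lets von Neumann's codimension bound cap $\dim L_a$ by $n$ uniformly in $a$. An alternative would be to compare $\widetilde A$ with the Friedrichs extension $A_F\ge mI$, whose resolvent differs from that of $\widetilde A$ by a rank-$\le n$ operator, and then to apply Weyl's theorem to locate the essential spectrum; but the spectral-subspace argument is self-contained and yields the exact count directly.
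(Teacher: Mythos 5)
Your proof is correct. A point of comparison first: the paper does not prove Proposition \ref{semibounded} at all --- it is quoted as an abstract result with a citation to Naimark's book \cite{Nai67} and then applied to the operators $\bb T_{0,k}$, $\bb T_{0,N-k}$ in the Appendix --- so there is no internal proof to measure yours against. What you have written is a complete, self-contained proof along the classical lines: von Neumann's formula $\dom(\widetilde A)=\dom(\overline A)\dotplus(I+U)\mathcal N_+$ gives codimension exactly $n$ (injectivity of $g\mapsto g+Ug$ following from $\mathcal N_+\cap\mathcal N_-=\{0\}$), and the dimension count on the spectral subspaces $L_a=\ran E_{\widetilde A}([a,m))$ is exactly the right device: it keeps you inside $\dom(\widetilde A)$, makes the inequality $(\widetilde A v,v)<m\|v\|^2$ strict on $L_a\setminus\{0\}$, and so forces $\dim L_a\le n$, after which the increasing union $\bigcup_a L_a$, being a subspace of dimension $\le n$, is closed and hence equals $\ran E_{\widetilde A}((-\infty,m))$. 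Your structural remark is also well taken and handled in the right logical order: one may not assume the extension is semibounded (that is in effect a corollary of the statement being proved, valid precisely because $n<\infty$), and your spectral-subspace argument never uses it. The alternative you sketch (Krein's resolvent formula plus Weyl's theorem) would indeed only localize the essential spectrum in $[m,\infty)$ and would not by itself deliver the bound of $n$ on the total multiplicity, so your choice of the direct argument is the better one.
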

 \begin{proposition}\label{grafoN'}
Let $\alpha\neq 0$,   $k=1,\dots,\left[\tfrac{N-1}{2}\right]$.   Then the following assertions hold. 
\begin{itemize}
\item[$(i)$] If  $\alpha< 0$, then $n(\mathbf{T}^\alpha_{1,k})\leq k+1$.
\item[$(ii)$] If $\alpha>0$, then $n(\mathbf{T}^\alpha_{1,k})\leq N-k$.
  \end{itemize}
\end{proposition}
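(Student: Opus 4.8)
The plan is to exploit the permutation symmetry of $\mathbf{T}^\alpha_{1,k}$ in order to reduce the whole-space Morse index to the already computed symmetric-subspace index plus a sum of single-edge contributions. First I would observe that $\mathbf{T}^\alpha_{1,k}$ commutes with the unitary action of the group $\Sigma_k\times\Sigma_{N-k}$ permuting the first $k$ edges and the last $N-k$ edges separately (the potentials $V_k^\mp$ are constant on each group, and both the continuity condition and the sum condition defining $\dom(\mathbf{T}^\alpha_{1,k})$ are invariant). Hence the self-adjoint operator $\mathbf{T}^\alpha_{1,k}$ reduces on the orthogonal subspaces
\[
L^2(\mathcal{G})=L^2_k(\mathcal{G})\oplus\mathcal{M}_1\oplus\mathcal{M}_2,
\]
where $\mathcal{M}_1=\{\mathbf{V}:v_{k+1}=\dots=v_N=0,\ \sum_{j=1}^kv_j=0\}$ and $\mathcal{M}_2=\{\mathbf{V}:v_1=\dots=v_k=0,\ \sum_{j=k+1}^Nv_j=0\}$ are the sum-zero subspaces of each group, of edge-dimension $k-1$ and $N-k-1$. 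The key point is that for $\mathbf{V}\in\mathcal{M}_1$ the continuity condition together with $v_{k+1}(0)=\dots=v_N(0)=0$ forces $v_1(0)=\dots=v_k(0)=0$, so $\mathbf{T}^\alpha_{1,k}|_{\mathcal{M}_1}$ is $(k-1)$ copies of the single half-line operator $L^-=-\tfrac{d^2}{dx^2}+(x-a_k)^2-3$ with Dirichlet condition at $0$ (the sum condition becoming automatic), and likewise $\mathbf{T}^\alpha_{1,k}|_{\mathcal{M}_2}$ is $(N-k-1)$ copies of $L^+=-\tfrac{d^2}{dx^2}+(x+a_k)^2-3$ with Dirichlet condition. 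Thus
\[
n(\mathbf{T}^\alpha_{1,k})=n(\mathbf{T}^\alpha_{1,k}|_{L^2_k(\mathcal{G})})+(k-1)\,n(L^-_{\mathrm{Dir}})+(N-k-1)\,n(L^+_{\mathrm{Dir}}),
\]
and the first term equals $2$ for $\alpha<0$ and $1$ for $\alpha>0$ by Proposition \ref{n(L_1)}.

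Next I would pin down the two single-edge Dirichlet Morse indices. The general bound $n(L^\pm_{\mathrm{Dir}})\le 1$ follows by extending any $v\in H^1(\mathbb{R}_+)$ with $v(0)=0$ by zero to $\mathbb{R}$ and comparing, through the min-max principle, with the full-line oscillator $-\tfrac{d^2}{dx^2}+(x\mp a_k)^2-3$ on $L^2(\mathbb{R})$, which has exactly one negative eigenvalue (the ground state at $-2$, the next eigenvalue being $0$); since the Dirichlet form is the restriction of the full-line form to a subspace, the second eigenvalue of $L^\pm_{\mathrm{Dir}}$ is $\ge 0$. For the sharper vanishing I would invoke the factorization identity already used in Proposition \ref{grafoN2}$(i)$: writing $\psi=\varphi'_{k,j}$ for the appropriate zero-energy solution one has $\langle L v,v\rangle=\int_0^\infty\psi^2|(v/\psi)'|^2\,dx+[-v'v+v^2\psi'/\psi]_0^\infty$, and whenever the centre of the well is negative the solution $\psi(x)\propto(x-c)e^{-(x-c)^2/2}$ has no zero on $(0,\infty)$, so with the Dirichlet condition $v(0)=0$ the boundary term vanishes and $L_{\mathrm{Dir}}\ge 0$, that is $n(L_{\mathrm{Dir}})=0$.

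Finally I would assemble the count according to the sign of $\alpha$, using $a_k=\tfrac{\alpha}{2k-N}$ with $2k-N<0$. For $\alpha<0$ we have $a_k>0$, so $L^+$ has negative centre $-a_k$ and $n(L^+_{\mathrm{Dir}})=0$, whence $n(\mathbf{T}^\alpha_{1,k})\le 2+(k-1)\cdot 1+(N-k-1)\cdot 0=k+1$; for $\alpha>0$ we have $a_k<0$, so $L^-$ has negative centre $a_k$ and $n(L^-_{\mathrm{Dir}})=0$, whence $n(\mathbf{T}^\alpha_{1,k})\le 1+(k-1)\cdot 0+(N-k-1)\cdot 1=N-k$. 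The Sturm--Liouville bookkeeping for a single edge is routine; the step I expect to be most delicate is verifying cleanly that the symmetry-reduced boundary conditions on $\mathcal{M}_1$ and $\mathcal{M}_2$ collapse to pure Dirichlet conditions, so that the global sum condition disappears and each decoupled single-edge operator is self-adjoint on the correct domain, since this is exactly what converts the vertex $\delta$-coupling into the edgewise half-line problems that drive the estimate.
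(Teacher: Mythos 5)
Your proof is correct, but it follows a genuinely different route from the paper's. The paper works entirely within extension theory: it exhibits $\mathbf{T}^\alpha_{1,k}$ as a self-adjoint extension of a non-negative symmetric operator $\mathbf{T}_{0,k}$ (obtained by imposing Dirichlet conditions at the vertex and, on the bump edges, at the point $a_k$ where $\varphi'_{k,j}$ vanishes -- exactly the conditions that kill the boundary terms in the factorization identity), computes its deficiency indices to be $k+1$ (resp. $N-k$) via the scale-of-Hilbert-spaces construction of \cite{ak}, and then invokes the abstract bound of Proposition \ref{semibounded}. You instead exploit the $\Sigma_k\times\Sigma_{N-k}$ symmetry: the isotypic decomposition $L^2(\mathcal{G})=L^2_k(\mathcal{G})\oplus\mathcal{M}_1\oplus\mathcal{M}_2$ reduces $\mathbf{T}^\alpha_{1,k}$, the sum-zero sectors collapse (as you correctly verify: continuity plus vanishing of the other group's components forces Dirichlet data, and the flux condition becomes automatic since $\sum_j v_j\equiv 0$) into $(k-1)$ and $(N-k-1)$ copies of scalar half-line Dirichlet oscillators, and the scalar Morse indices are pinned down by min-max comparison with the full-line oscillator (spectrum $-2,0,2,\dots$, so at most one negative eigenvalue) and by the factorization identity when the well's centre lies at $x<0$ (index zero, using that $\varphi'$ then has no zero on $[0,\infty)$). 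The trade-offs are real: your argument yields the sharper \emph{equality} $n(\mathbf{T}^\alpha_{1,k})=n(\mathbf{T}^\alpha_{1,k}|_{L^2_k(\mathcal{G})})+(k-1)\,n(L^-_{\mathrm{Dir}})+(N-k-1)\,n(L^+_{\mathrm{Dir}})$, reducing the whole-space count to a single explicit scalar spectral problem, and it avoids deficiency-index machinery altogether; on the other hand it is not self-contained, since it imports Proposition \ref{n(L_1)} (whose proof required the analytic perturbation theory of Lemma \ref{analici} plus the Riesz-projection continuation argument), whereas the paper's appendix proof is independent of those body results. There is no circularity in your dependence, since Proposition \ref{n(L_1)} is established without reference to the appendix, and your sign bookkeeping ($a_k=\tfrac{\alpha}{2k-N}$, $2k-N<0$) matches the paper's conventions in both cases.
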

\begin{proof}  $(i)$ 
First,  note that $\mathbf{T}^\alpha_{1,k}$ is the self-adjoint extension of the  symmetric operator 
\begin{equation*}
\begin{split}
&\mathbf{T}_{0,k}=\left(\Big(-\frac{d^2}{dx^2}+(x\pm \tfrac{\alpha}{2k-N})^2-3\Big)\delta_{i,j}\right),\\
&\dom(\mathbf{T}_{0,k})=\left\{\begin{array}{c}
\bb V\in W^2(\mathcal{G}):  v_1(0)=\dots=v_N(0)=0,\\
\sum\limits_{j=1}^N  v_j'(0)=0,\, v_1(a_k)=\dots=v_k(a_k)=0 
\end{array}\right\}, \,\, a_k=\frac{\alpha}{2k-N}.
\end{split}
\end{equation*}
Below we show that the operator $\mathbf{T}_{0,k}$ is non-negative and  $n_\pm(\mathbf{T}_{0,k})=k+1$ (when $\bb T_{0,k}$  is assumed to act  on complex-valued  functions).

 First, note that every component of the vector $\mathbf{V}=(v_j)_{j=1}^N\in W^2(\mathcal{G})$ satisfies the following identity 
\begin{equation}\label{identity_graph_k}  
-v_j''+\omega v_j+\left((x\pm \tfrac{\alpha}{2k-N})^2-3\right)v_j=
\frac{-1}{\varphi_{k,j}'}\frac{d}{dx}\left[(\varphi_{k,j}')^2\frac{d}{dx}\left(\frac{v_j}{\varphi_{k,j}'}\right)\right],\quad x\in\mathbb{R}_+\setminus \{a_k\}.
\end{equation}
Moreover, for $j\in\{k+1,\dots,N\}$ the above equality holds also for $a_k$ since $\varphi_{k,j}'(a_k)\neq 0,\, j\in\{k+1,\dots,N\}$. 
Using the above equality and integrating by parts, we get for $\mathbf{V}\in \dom(\mathbf{T}_{0,k})$
 \begin{equation*}\label{nonneg_graph}\begin{split}
&(\mathbf{T}_{0,k}\mathbf{V},\mathbf{V})=\sum\limits_{j=1}^k \Big(\int\limits_{0}^{a_k-} + \int\limits_{a_k+}^{+\infty}\Big) 
(\varphi_{k,j} ')^2\left[\frac{d}{dx}\left(\frac{v_j}{\varphi_{k,j} '}\right)\right]^2dx+
\sum\limits_{j=k+1}^N\int\limits^{\infty}_{0}(\varphi_{k,j}')^2\left[\frac{d}{dx}\left(\frac{v_j}{\varphi_{k,j} '}\right)\right]^2dx\\&+
\sum\limits_{j=1}^N\left[-v_j'{v}_j+v_j^2\frac{\varphi_{k,j}''}{\varphi_{k,j}'}\right]^{\infty}_{0}+\sum\limits_{j=1}^k\left[v_j'{v}_j-v_j^2\frac{\varphi_{k,j}''}{\varphi_{k,j}'}\right]^{a_k+}_{a_k-}\\&=\sum\limits_{j=1}^k \Big( \int\limits_{0}^{a_k-} + \int\limits_{a_k+}^{+\infty}\Big) 
(\varphi_{k,j} ')^2\left[\frac{d}{dx}\left(\frac{v_j}{\varphi_{k,j} '}\right)\right]^2dx+
\sum\limits_{j=k+1}^N\int\limits^{\infty}_{0}(\varphi_{k,j}')^2\left[\frac{d}{dx}\left(\frac{v_j}{\varphi_{k,j} '}\right)\right]^2dx\geq 0.
\end{split}
\end{equation*}

The  equality $\sum\limits_{j=1}^k\left[v_j'{v}_j-v_j^2\frac{\varphi_{k,j}''}{\varphi_{k,j}'}\right]^{a_k+}_{a_k-}=0$ needs an additional explanation. Indeed, since $a_k$ is a zero of the first order for $\varphi_{k,j}'$ (i.e. $\varphi_{k,j}''(a_k)\neq 0$),  $v_j\in H^2(\mathbb{R_+})$ and  $v_j(a_k)=0$, we get  $
\lim\limits_{x\rightarrow a_k}\displaystyle{\frac{v^2_j(x)}{\varphi_{k,j}'(x)}}=\lim\limits_{x\rightarrow a_k}\displaystyle{\frac{2v_j(x)v'_j(x)}{\varphi_{k,j}''(x)}}=0.$
To prove $n_\pm(\mathbf{T}_{0,k})=k+1$ we will use the idea of the proof of Theorem \ref{spect_L^0_1}$(iii)$.
Consider the following non-negative self-adjoint operator 
\begin{equation*}
\begin{split}
&\bb T_k=\left(\Big(-\frac{d^2}{dx^2}+(x\pm \tfrac{\alpha}{2k-N})^2\Big)\delta_{i,j}\right),\\
& \dom(\bb T_k)=\left\{\mathbf{V}\in W^2(\mathcal{G}):  v_1(0)=\dots=v_N(0), \sum\limits_{j=1}^N  v_j'(0)=0 \right\}.
\end{split}
\end{equation*}
As in the proof of Theorem \ref{spect_L^0_1}$(iii)$, we define chain of the Hilbert spaces
\begin{equation*}\label{triples}
\dots\subset \mathfrak H_2(\bb T_k)\subset \mathfrak H_1(\bb T_k)\subset L^2(\mathcal G)=\mathfrak H_0(\bb T_k)\subset\mathfrak H_{-1}(\bb T_k)\subset \mathfrak H_{-2}(\bb T_k)\subset\dots
\end{equation*}
We have the embedding $\mathfrak H_1(\bb T_k) \hookrightarrow  H^1(\mathcal G)$ and, by the Sobolev embedding, $\mathfrak H_1(\bb T_k) \hookrightarrow L^\infty (\mathcal G)$. From the former remark we obtain that the functionals
\begin{equation*}
\begin{split}
&\delta_1: \mathfrak H_1(\bb T_k)\to \mathbb C,\quad \delta_{j, a_k}:  \mathfrak H_1(\bb T_k)\to \mathbb C,\\
&\delta_1(\bb V)=v_1(0),\quad  \delta_{j,a_k}(\bb V)=v_j(a_k),\,\,j\in\{1,\dots,k\},
\end{split}
\end{equation*}  belong to $\mathfrak H_1(\bb T_k)'=\mathfrak H_{-1}(\bb T_k)$ and consequently  $\delta_1,\delta_{j, a_k}\in \mathfrak H_{-2}(\bb T_k)$. Therefore, using   \cite[Lemma 3.1.1]{ak}, it follows  that the restriction $\hat{\bb T}_{0,k}$ of the operator $\bb T_k$  onto the domain 
$$
\dom(\hat{\bb T}_{0,k})=\left\{\begin{array}{c}
\bb V\in \dom(\bb T_k): \delta_1(\bb V)=v_1(0)=0,\\
\delta_{j,a_k}(\bb V)=v_j(a_k)=0,\,j\in\{1,\dots,k\}\end{array}\right\}=\dom(\bb T_{0,k})$$
 is a densely defined symmetric operator with  equal deficiency indices   $n_{\pm}(\hat{\bb T}_{0,k})=k+1$. By    \cite[Chapter IV, Theorem 6]{Nai67}, the operators $\hat{\bb T}_{0,k}$ and $\bb T_{0,k}$ have equal deficiency indices. Therefore, $n(\mathbf{T}^\alpha_{1,k})\leq k+1$.

$(ii)$ The proof is similar. In particular, we need to consider the operator $\bb T^\alpha_{1,k}$ as  the self-adjoint extension of the non-negative symmetric operator \begin{equation*}
\begin{split}
&\mathbf{T}_{0,N-k}=\left(\Big(-\frac{d^2}{dx^2}+(x\pm \tfrac{\alpha}{2k-N})^2-3\Big)\delta_{i,j}\right),\\ &\dom(\mathbf{T}_{0,N-k})=\left\{
\bb V\in \dom(\bb T^\alpha_{1,k}): v_{k+1}(a_k)=\dots=v_N(a_k)=0 
\right\}. 
\end{split}
\end{equation*}
The deficiency indices of $\mathbf{T}_{0,N-k}$ equal $N-k$ (since basically $\mathbf{T}_{0,N-k}$ is the restriction of the operator $\bb T^\alpha_{1,k}$ onto the subspace of  codimension $N-k$). To show the non-negativity of $\mathbf{T}_{0,N-k}$,  we need to use formula \eqref{identity_graph_k}. It induces 
\begin{equation*}\label{nonneg_graph}\begin{split}
&(\mathbf{T}_{0,N-k}\mathbf{V},\mathbf{V})=\sum\limits_{j=k+1}^N \Big( \int\limits_{0}^{a_k-}+\int\limits_{a_k+}^{+\infty}\Big) 
(\varphi_{k,j} ')^2\left[\frac{d}{dx}\left(\frac{v_j}{\varphi_{k,j} '}\right)\right]^2dx+
\sum\limits_{j=1}^k\int\limits^{\infty}_{0}(\varphi_{k,j}')^2\left[\frac{d}{dx}\left(\frac{v_j}{\varphi_{k,j} '}\right)\right]^2dx  \\&+
\sum\limits_{j=1}^N\left[-v_j'{v}_j+v_j^2\frac{\varphi_{k,j}''}{\varphi_{k,j}'}\right]^{\infty}_{0}+\sum\limits_{j=k+1}^N\left[v_j'{v}_j-v_j^2\frac{\varphi_{k,j}''}{\varphi_{k,j}'}\right]^{a_k+}_{a_k-}\\&=\sum\limits_{j=k+1}^N \Big( \int\limits_{0}^{a_k-} + \int\limits_{a_k+}^{+\infty}\Big) 
(\varphi_{k,j} ')^2\left[\frac{d}{dx}\left(\frac{v_j}{\varphi_{k,j} '}\right)\right]^2dx+
\sum\limits_{j=1}^k\int\limits^{\infty}_{0}(\varphi_{k,j}')^2\left[\frac{d}{dx}\left(\frac{v_j}{\varphi_{k,j} '}\right)\right]^2dx\\&+
\sum\limits_{j=1}^N\left[v_j'(0){v}_j(0)-v^2_j(0)\frac{\varphi_{k,j}''(0)}{\varphi_{k,j}'(0)}\right]\geq 0.
\end{split}
\end{equation*}
Indeed, $\sum\limits_{j=k+1}^N\left[v_j'{v}_j-v_j^2\frac{\varphi_{k,j}''}{\varphi_{k,j}'}\right]^{a_k+}_{a_k-}=0$ (see the proof of item $(i)$). Moreover,
 $$\sum\limits_{j=1}^N\left[v_j'(0){v}_j(0)-v_j^2(0)\frac{\varphi_{k,j}''(0)}{\varphi_{k,j}'(0)}\right]=v_1^2(0)\frac{(N-2k)^2}{\alpha}\geq 0.$$ 
 Finally,  due to Proposition \ref{semibounded}, we get  the result.
\end{proof}
\begin{remark}
\begin{itemize}
\item[$(i)$] It is easily seen that 
 $$\sum\limits_{j=1}^N\left[v_j'(0){v}_j(0)-v_j^2(0)\frac{\varphi_{k,j}''(0)}{\varphi_{k,j}'(0)}\right]=v_1^2(0)\frac{(N-2k)^2}{\alpha}\leq 0$$
  for $\alpha<0$, and, therefore, the  restriction  of  $\bb T^\alpha_{1,k}$  onto the subspace 
  $$\{\bb V\in W^2(\mathcal{G}): v_1(a_k)=\dots=v_k(a_k)=0\}$$ is not  a non-negative operator as $\alpha<0$. Thus, we need  to assume additionally that $v_1(0)=\dots=v_N(0)=0$. 
  \item[$(ii)$]  The result of the item $(ii)$ (for $\alpha>0$) of the above Proposition can be extended to the case of $k=0$, i.e. $n(\bb T_{1,0}^\alpha)\leq N$. 
 
  \end{itemize}
\end{remark}

\section*{Acknowledgements.}

The author  was supported by FAPESP under the project 2016/02060-9.

\end{document}